\newcommand{\id}{\operatorname{id}}
\newcommand{\val}{\operatorname{val}}
\newcommand{\Ro}{\operatorname{RO^+}}
   \theoremstyle{plain}%default
   \newtheorem{thm}{Theorem}[section]
   \newtheorem{prop}[thm]{Proposition}
   \newtheorem{lemma}[thm]{Lemma}  
   \theoremstyle{definition}
   \newtheorem{example}[thm]{Example}
   \theoremstyle{remark}
   \numberwithin{equation}{section}
\title[Exact circle maps and KMS states]{Exact circle maps and KMS states}
\author{Klaus Thomsen}
        \thanks{AMS 2010 classification: 46L60, 37E10}
\email{matkt@imf.au.dk}
\address{Institut for Matematik, Aarhus University, Ny Munkegade, 8000 Aarhus C, Denmark}
\begin{document}

\maketitle

%\documentclass[a4paper]{memoir}

%\end{document}
\begin{abstract} We describe the KMS-states and the ground states for the gauge action on the $C^*$-algebra of the oriented transformation groupoid of a continuous piecewise monotone and exact map of the circle.
\end{abstract}

\section{Introduction}\label{sec0} Some of the interest in the construction of $C^*$-algebras from dynamical systems stems from the role played by $C^*$-algebras and one-parameter groups of automorphisms in quantum statistical mechanics, \cite{BR}, where the algebra represents the observables and the one-parameter group the time evolution. The states of the algebra represent the states of the physical system which is being modelled, and among the states there is a distinguished class with a special relation to the one-parameter group. They represent the equilibrium states of the system, and are called KMS states, after Kubo, Martin and Schwinger who introduced the defining relation. They are associated to a real number $\beta$ which is interpreted as the inverse temperature of the physical system. 

In recent years there has been renewed focus on KMS states, partly caused by a relation to number theory established by Bost and Connes in \cite{BC}. The main purpose with the present paper is to show that there are classes of one-dimensional maps where an appropriate version of the transformation groupoid gives rise to a simple and purely infinite $C^*$-algebra for which the canonical gauge action exhibits a richness in the structure of KMS and ground states comparable to what is being discovered in systems constructed from number theory, e.g. in \cite{BC} and \cite{LR}. Specifically, it will be shown that for some of the $C^*$-algebras considered in \cite{ST} there are finitely many KMS states for all inverse temperatures above the topological entropy of the map, a unique one when the inverse temperature equals the topological entropy and infinitely many, parametrized by the state space of a finite dimensional $C^*$-algebra, at infinite inverse temperature; the so-called ground states. This occurs for continuous piecewise monotone maps $\phi$ on the circle that have at least a single turning point and are topologically exact. The number of KMS states whose inverse temperature $\beta$ exceeds the topological entropy $h(\phi)$ varies with the map, as does the structure of the ground states, depending on the orbits of the critical points. All the KMS states factor through the conditional expectation onto the copy of $C(\mathbb T)$ inside the $C^*$-algebra and there is therefore a bijective correspondence between the KMS states and measures satisfying a Radon-Nikodym relation given by the oriented transformation groupoid. When $\beta$ equals the topological entropy the measure is the pullback of Lebesgue measure under the conjugacy which turns $\phi$ into a piecewise linear map, while the measures responsible for the $\beta$-KMS states when $\beta > h(\phi)$ are purely atomic measures supported on the backward orbits of critical points that are not pre-periodic. The difference between the extremal KMS states at low temperatures and the unique $h(\phi)$-KMS state is detected by the von Neumann factors generated by the GNS-representations. For $\beta > h(\phi)$ the factors are all of type $I_{\infty}$ while the unique $h(\phi)$-KMS state gives rise to the hyperfinite $III_{\lambda}$-factor where $\lambda = e^{-h(\phi)}$.  

There are a few exceptional cases where the algebra of the oriented transformation groupoid is not simple, and has a quotient isomorphic to $C(\mathbb T)$, cf. \cite{ST}. This can occur when the degree of $\phi$ is 1 or -1, in which case there may be a non-critical fixed point $x$ for $\phi$ such that $\phi^{-1}(x) \backslash \{x\}$ only contains critical points. In these cases there are tracial states present, and they correspond to $0$-KMS states. In all other cases the $C^*$-algebra of the oriented transformation groupoid is simple, and hence has no tracial states since it is also purely infinite. The ground states arise because the fixed point algebra of the gauge action is not simple when there are critical points that are not pre-periodic; the same condition which ensures the presence of $\beta$-KMS states with $\beta > h(\phi)$. The critical points that are not pre-periodic give rise  to a finite dimensional quotient of the fixed point algebra whose state space parametrizes the ground states of the gauge action. The ground states factorises through the finite dimensional quotient of the fixed point algebra via the canonical conditional expectation onto the fixed point algebra.

This fairly rich structure of KMS states and ground states is unseen in the $C^*$-algebras coming from local homeomorphisms. In that setting it was shown in Theorem 6.8 of \cite{Th1} that even for generalized gauge actions there is an upper bound for the inverse temperatures that can occur; at least when the potential function defining the action is strictly positive or strictly negative. In contrast, there is a striking similarity, with a few intriguing differences, between the findings here and the descriptions of the KMS states for the gauge action on the $C^*$-algebras coming from the restriction of a rational map to its Julia set, both when the algebra is constructed as a Cuntz-Pimsner algebra as in \cite{IKW}, cf. Theorem 5.16 of \cite{IKW}, and as a groupoid algebra, cf. Theorem 7.5 in \cite{Th2}.

\section{KMS states and measures}\label{sec2}

Let $A$ be a $C^*$-algebra and $\alpha_t, t \in \mathbb R$, a continuous
one-parameter group of automorphisms of $A$. Let
$\beta \in \mathbb R$. A state $\omega$ of $A$ is a \emph{$\beta$-KMS state} when 
$$
\omega(a\alpha_{i\beta}(b)) = \omega(ba)
$$
for all elements $a,b$ in a dense $\alpha$-invariant $*$-algebra of $\alpha$-analytic elements,
cf. \cite{BR}. We consider in this paper the KMS states of a one-parameter group on the $C^*$-algebra constructed from a class of maps on the circle by a procedure developed in \cite{Th2} and \cite{ST}. To describe it we consider
$\mathbb T$ as an oriented space with the canonical counter-clockwise
orientation. Let $\phi : \mathbb T \to \mathbb T$ be a continuous map. There is then a
unique continuous map $f : [0,1] \to \mathbb R$ such that $f(0) \in
[0,1[$ and $\phi\left(e^{2 \pi i t} \right) = e^{2 \pi i f(t)}$
for all $t \in [0,1]$. We will refer to $f$ as \emph{the lift of
  $\phi$.} We say that $\phi$ is \emph{piecewise monotone} when there
are points $0 = c_0 < c_1 < \dots < c_N = 1$ such that $f$ is either
strictly increasing or strictly decreasing on the intervals
$]c_{i-1},c_i[, \ i = 1,2, \dots, N$. When $\phi : \mathbb T \to
\mathbb T$ is piecewise monotone and $t \in \mathbb T$ we define \emph{the $\phi$-valency} $\val(\phi,t)$ of
$t$ to be the element of the set 
$$
\left\{ (+,+), (-,-),
  (+,-), (-,+)\right\}
$$
determined by the conditions that $\val(\phi,t) = (+,+)$ when $\phi$ is strictly increasing in all
sufficiently small neighborhoods of $t$; $\val(\phi,t)
= (-,-)$ when $\phi$ is strictly decreasing in all sufficiently small
open neighborhoods of $t$; $\val(\phi,t) = (+,-)$ when $\phi$
is strictly increasing in all sufficiently small intervals to the left
of $t$ and strictly decreasing in all sufficiently small intervals to
the right of $t$; and finally $\val(\phi,t) = (-,+)$ when $\phi$
is strictly decreasing in all sufficiently small intervals to the left
of $t$ and strictly increasing in all sufficiently small intervals to
the right of $t$.

The valencies are used to define a groupoid as follows. When $x,y \in \mathbb T$ and $k \in \mathbb Z$ we write $x \overset{k}{\sim} y$ when $k =
n-m$ for some $n,m
\in \mathbb N$ such that $\phi^n(x) = \phi^m(y)$ and $\val\left(\phi^n,x\right)
= \val\left(\phi^m, y\right)$.
Set
$$
\Gamma^+_{\phi} = \left\{ (x,k,y) \in \mathbb T \times \mathbb Z \times
  \mathbb T : \  x\overset{k}{\sim} y \    \right\} .
$$ 
Then $\Gamma^+_{\phi}$ is a groupoid where the composable pairs are
$$
{\Gamma^+_{\phi}}^{(2)} = \left\{ ((x,k,y),(x',k',y')) \in
{\Gamma^+_{\phi}}^2 : \ y = x' \right\}
$$
and the product is
$$
(x,k,y)(y,k',y') = (x,k+k',y') .
$$
The inversion is given by $(x,k,y)^{-1} = (y,-k,x)$. Note that $\Gamma^+_{\phi}$ is a subgroupoid of the more standard transformation groupoid
$$
\Gamma_{\phi} = \left\{ (x,n-m,y) \in \mathbb T \times \mathbb Z \times \mathbb T : \ \phi^n(x) = \phi^m(y)\right\} .
$$

We will refer to $\Gamma^+_{\phi}$ as \emph{the oriented transformation groupoid} of $\phi$. To turn $\Gamma^+_{\phi}$ into a topological groupoid, introduce the subsets
$$
\Gamma^+_{\phi}(k,n) = \left\{ (x,l,y) \in \Gamma^+_{\phi} : \ l = k,
  \ \phi^{k+n}(x) = \phi^n(y) , \ \val\left(\phi^{k+n},x\right) = \val
  \left( \phi^n,y\right) \right\},
$$  
where $k \in \mathbb Z, \ n \in \mathbb N$, $n+k \geq 1$, $n \geq 1$. Each of these sets is the intersection of a closed and an open subset in $\mathbb T \times \mathbb Z \times \mathbb T$ and hence a locally compact Hausdorff space in the relative topology. Furthermore, $\Gamma^+_{\phi}(k,n)$ is an open subset of $\Gamma^+_{\phi}(k,n+1)$ and it follows that the union
$$
\Gamma^+_{\phi}(k) = \bigcup_{n \geq -k+1} \Gamma^+_{\phi}(k,n)
$$
is a locally compact Hausdorff space in the inductive limit topology. Since $\Gamma^+_{\phi}$ is the disjoint union of the subsets $\Gamma^+_{\phi}(k), k \in \mathbb Z$, this turns $\Gamma^+_{\phi}$ into a locally compact Hausdorff groupoid which is second countable and in fact an \'etale groupoid in the sense that the range and source maps are local homeomorphisms. See \cite{ST} for more details.

The $C^*$-algebra we consider is the (reduced) groupoid $C^*$-algebra $C^*_r\left(\Gamma^+_{\phi}\right)$ of $\Gamma^+_{\phi}$, cf. \cite{Re}. It was shown in \cite{ST} that $C^*_r\left(\Gamma^+_{\phi}\right)$ is nuclear and satisfies the universal coefficient theorem of Rosenberg and Schochet. It is purely infinite when $\phi$ is transitive, and simple if and only if $\phi$ is exact and there is no non-critical fixed point $x$ such that $\phi^{-1}(x) \backslash \{x\}$ only contains critical points, cf. Proposition 4.5 and Theorem 5.1 in \cite{ST}. Note that the unit space of $\Gamma^+_{\phi}$ is identified with $\mathbb T$ and that the range and source maps $r,s : \Gamma^+_{\phi} \to \mathbb T$ are given by $r(x,k,y) = x$ and $s(x,k,y) = y$, respectively. Since the unit space of $\Gamma^+_{\phi}$ is $\mathbb T$ the $C^*$-algebra $C(\mathbb T)$ of continuous functions on $\mathbb T$ is a canonical $C^*$-subalgebra for $C^*_r\left(\Gamma^+_{\phi}\right)$ and there is a conditional expectation 
$$
P : C^*_r\left(\Gamma^+_{\phi}\right) \to C(\mathbb T)
$$ 
obtained from the map $C_c\left(\Gamma^+_{\phi}\right) \to C(\mathbb T)$ given by restricting functions to $\mathbb T \subseteq \Gamma^+_{\phi}$, cf. \cite{Re}.

 The one-parameter action $\alpha$, called \emph{the gauge action}, whose KMS states we will examine is determined by the condition that
$$
\alpha_t(f)(x,k,y) = e^{i k t}f(x,k,y)
$$
when $f \in C_c\left(\Gamma^+_{\phi}\right)$. We will restrict the attention to the case where $\phi$ is exact and not a local homeomorphism. The last condition means that we require the presence of at least a single critical point, and there are then at least two. We shall make good use of this condition, and it seems appropriate to point out that the case where $\phi$ is exact and does not have any critical points can be handled by reference to known results. In fact, when $\phi$ is a local homeomorphism and exact it is conjugate to the algebraic homomorphism $z \mapsto z^d$ where $d \in \mathbb Z$ is the degree of $\phi$ by Theorem 4.4 in \cite{AT}. As pointed out in Lemma 3.6 of \cite{ST} the oriented transformation groupoid $\Gamma^+_{\phi}$ is isomorphic to the Renault, Deaconu, Anantharaman-Delaroche groupoid, \cite{Re}, \cite{De}, \cite{An}, of $\phi$ when $d \geq 2$ and that of $\phi^2$ when $d \leq -2$. In this way it follows from \cite{KR} that there is a unique KMS state for the gauge action on $C^*_r\left(\Gamma^+_{\phi}\right)$; it occurs at the inverse temperature $\beta = \log d$ when $d \geq2$, and $\beta = 2 \log |d|$ when $d \leq -2$. It will follow from the methods we employ here that the measure on the circle which corresponds to the KMS state is the pullback of Lebesgue measure under the homeomorphism which conjugates $\phi$ or $\phi^2$ to an algebraic homomorphism on the circle.

It may be possible to extend our methods to cases where $\phi$ is not exact, but exactness is a necessary condition for simplicity of $C^*_r\left(\Gamma^+_{\phi}\right)$ which is also sufficient when the degree of $\phi$ is not $1$ or $-1$. It seems satisfactory to know that the structure of KMS states is not related to the presence of non-trivial ideals or quotients when the $C^*$-algebra is simple. Throughout the rest of the paper we assume that $\phi$ is continuous, piecewise monotone, exact and not locally injective.

We shall use the very exhaustive and general study of KMS-states for cocycle actions on groupoid $C^*$-algebras performed by Neshveyev in \cite{N}. To this end note that the gauge action $\alpha$ is the one-parameter group of automorphisms arising from the homomorphism (or cocycle) $c_g : \Gamma^+_{\phi} \to \mathbb Z$ defined such that $c_g(x,k,y) = k$. Following \cite{Th2} we take advantage of the particular structure of the groupoid and the homomorphism $c_g$ to give the results of Neshveyev a more detailed description.

Let $W
\subseteq \Gamma^+_{\phi}$ be an open bi-section, i.e. an open subset such that $r :
W \to \mathbb T$ and $s: W \to \mathbb T$ are both injective. Then $r : W \to r(W)$ is a homeomorphism and we
denote its inverse by $r_W^{-1}$. Let $\beta
\in \mathbb R \backslash \{0\}$. As in \cite{Th2} we say that a finite Borel measure $\nu$ on $\mathbb T$ is \emph{$\left(\Gamma^+_{\phi},c_g\right)$-conformal with exponent $\beta$} when
\begin{equation}\label{Gconf}
\nu\left(s(W)\right) = \int_{r(W)} e^{\beta c_g\left(r_W^{-1}(x)\right)} \ d\nu(x)
\end{equation}
for every open bi-section $W$ of $\Gamma^+_{\phi}$. Recall that a Borel probability measure $\mu$ on $\mathbb T$ is \emph{non-atomic} when $\mu(\{x\}) = 0$ for all $x \in \mathbb T$ and \emph{purely atomic} when there is a set $A \subseteq \mathbb T$ such that $\mu(\{a\})  > 0$ for all $a \in A$ and $\sum_{a \in A} \mu(\{a\}) =1$.

\begin{prop}\label{decomp!} Let $\nu$ and $\mu$ be Borel probability
  measures on $\mathbb T$, $\nu$ non-atomic and $\mu$ purely atomic. Assume that $\nu$ and $\mu$ are both $\left(\Gamma^+_{\phi},c_g\right)$-conformal with exponent $\beta \neq 0$. Let $s \in [0,1]$.

There is then a $\beta$-KMS state $\omega$ for the gauge action on $C^*_r(\Gamma^+_{\phi})$
such that
\begin{equation}\label{decmpp}
\omega(a) = s \int_{\mathbb T} P(a) \ d\nu + (1-s) \int_{\mathbb T} P(a) \ d \mu
\end{equation}
for all $a \in C^*_r\left(\Gamma^+_{\phi}\right)$. Conversely, any $\beta$-KMS state
$\omega$ for the gauge action admits a unique decomposition of the form (\ref{decmpp}).
\end{prop}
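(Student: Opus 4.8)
The plan is to invoke Neshveyev's characterization of KMS states for cocycle actions on étale groupoid $C^*$-algebras (\cite{N}), which says that $\beta$-KMS states for the gauge action on $C^*_r(\Gamma^+_\phi)$ are in bijective correspondence with pairs consisting of a $\left(\Gamma^+_\phi,c_g\right)$-conformal probability measure $\lambda$ with exponent $\beta$ together with a measurable field of states on the $C^*$-algebras of the isotropy groups, the latter being suitably invariant under the natural action; and moreover each such state factors through the conditional expectation $P$ followed by integration against $\lambda$ once one checks that the relevant isotropy contributes nothing extra. Since the exponent $\beta \neq 0$ here, the only conformal measures that arise have their isotropy contribution forced to be trivial — the cocycle $c_g$ is nonzero on the isotropy-with-nontrivial-$\mathbb Z$-part, so a state concentrated on a nontrivial isotropy element would violate the KMS condition, exactly as in \cite{Th2}. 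Hence for $\beta \neq 0$ the $\beta$-KMS states are \emph{precisely} the states of the form $a \mapsto \int_{\mathbb T} P(a)\,d\lambda$ with $\lambda$ a $\left(\Gamma^+_\phi,c_g\right)$-conformal probability measure of exponent $\beta$.

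Granting that dictionary, the proposition reduces to an elementary statement about the convex structure of the set of conformal measures. First I would verify the forward direction: if $\nu$ is non-atomic and $\mu$ is purely atomic, both $\left(\Gamma^+_\phi,c_g\right)$-conformal with exponent $\beta$, then the convex combination $\lambda = s\nu + (1-s)\mu$ is again a probability measure and, since the conformality relation (\ref{Gconf}) is linear in the measure, $\lambda$ is conformal with the same exponent. Applying Neshveyev's construction to $\lambda$ produces a $\beta$-KMS state $\omega$ with $\omega(a) = \int_{\mathbb T} P(a)\,d\lambda = s\int_{\mathbb T}P(a)\,d\nu + (1-s)\int_{\mathbb T}P(a)\,d\mu$, which is exactly (\ref{decmpp}).

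For the converse, given a $\beta$-KMS state $\omega$, the dictionary yields a conformal probability measure $\lambda$ with $\omega = \int P(\cdot)\,d\lambda$. Now apply the Lebesgue decomposition of $\lambda$ into its non-atomic part and its atomic part: $\lambda = \lambda_{na} + \lambda_{at}$. The key point is that each piece is \emph{separately} $\left(\Gamma^+_\phi,c_g\right)$-conformal with exponent $\beta$: the conformality relation (\ref{Gconf}), applied to bi-sections, transports atoms to atoms and non-atomic mass to non-atomic mass (because $r_W^{-1}$ is a homeomorphism onto its image and the source map restricted to $W$ is injective, so $s(W)$ has an atom iff $r(W)$ does, at the corresponding point), and thus the relation splits along the Lebesgue decomposition. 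Writing $s = \lambda_{na}(\mathbb T)$, setting $\nu = s^{-1}\lambda_{na}$ and $\mu = (1-s)^{-1}\lambda_{at}$ (with the obvious convention when $s \in \{0,1\}$) gives the decomposition; uniqueness follows because the Lebesgue decomposition of $\lambda$ is unique and $\lambda$ is uniquely determined by $\omega$ via $\lambda = P_*(\omega)$, i.e. $\lambda(g) = \omega(g)$ for $g \in C(\mathbb T)$.

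The main obstacle is the first step — pinning down exactly what Neshveyev's theorem gives in this concrete situation and checking rigorously that, for $\beta \neq 0$, no genuine isotropy data survives so that every $\beta$-KMS state does factor through $P$. This requires understanding the isotropy groups of $\Gamma^+_\phi$ and the restriction of $c_g$ to them: one must show that any element of the isotropy group $(\Gamma^+_\phi)^x_x$ with nonzero $c_g$-value would, via the KMS/conformality constraints, force a contradiction (this is where exactness and the absence of the exceptional fixed-point configuration, guaranteeing the relevant dynamical non-degeneracy, enter), so that the only contribution is from the trivial character and the state is determined by the measure alone. Once this structural input is in place — and it is precisely the kind of reduction carried out in \cite{Th2} — the rest is the routine linear-algebra-of-measures argument sketched above, together with the trivial observation that the Lebesgue decomposition respects relation (\ref{Gconf}).
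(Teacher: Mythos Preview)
Your approach is essentially the paper's: invoke Neshveyev and its refinement in \cite{Th2} to identify $\beta$-KMS states with $\left(\Gamma^+_\phi,c_g\right)$-conformal probability measures, then split any such measure into its atomic and non-atomic parts. The paper simply cites Theorem~2.4 of \cite{Th2} for that decomposition, while you spell out the Lebesgue-decomposition step and the observation that the conformality relation respects it; both are fine.

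There is one misattribution worth correcting. You say the triviality of the isotropy contribution is ``where exactness and the absence of the exceptional fixed-point configuration \ldots\ enter.'' The paper uses neither hypothesis here. What is actually used is that $\phi$ is piecewise monotone: each $\phi^k$ then has only finitely many fixed points (at most one per monotonicity interval), so the pre-periodic points---precisely the points with nontrivial isotropy in $\Gamma^+_\phi$---form a \emph{countable} set. This countability is what licenses the refinement of Neshveyev's theorem from \cite{Th2}: a non-atomic conformal measure gives the isotropy set zero mass, so the field of isotropy states is $\mu$-a.e.\ trivially determined; and for the atomic part, the consistency condition (Lemma~2.3 of \cite{Th2})---if $(x,k,x)\in\Gamma^+_\phi$ with $k\neq 0$ then conformality would give $\mu(\{x\}) = e^{\beta k}\mu(\{x\})$---forces every atom to sit at a point with trivial isotropy. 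That is the mechanism, not any contradiction derived from exactness. Your sentence ``any element of the isotropy group $(\Gamma^+_\phi)^x_x$ with nonzero $c_g$-value would \ldots\ force a contradiction'' should be read as: an \emph{atom of the conformal measure at} such a point forces a contradiction; the mere existence of nontrivial isotropy somewhere is harmless, and indeed generic.
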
  
\begin{proof} Note that the set $\left\{x\in \mathbb T : \ \phi^k(x) = x \right\}$ is finite for each $k$; in fact, any of the intervals where $\phi^k$ is monotone contains at most one element from this set. It follows that the set of pre-periodic points is countable, and therefore in turn that the points in $\mathbb T$ with non-trivial isotropy group in $\Gamma^+_{\phi}$ is countable. Hence we can use the refinement of Neshveyev's results described in \cite{Th2}. Then the proposition follows from Theorem 2.4 in \cite{Th2} by observing that the nature of $c_g$ combined with the consistency condition in Lemma 2.3 of \cite{Th2} ensures that a purely atomic  $\left(\Gamma^+_{\phi},c_g\right)$-conformal measure must be supported on points with trivial isotropy.
\end{proof}

Thus there is a bijective correspondence between the $\beta$-KMS states and a subset of the Borel probability measures on $\mathbb T$, and our task is reduced to the identification of this subset.

Let $x\in \mathbb T$. The $\Gamma^+_{\phi}$-orbit of $x$ is the set
$$
\Ro(x) = \left\{y \in \mathbb T : \ \phi^n(x) = \phi^m(y), \ \val\left(\phi^n,x\right) = \val\left(\phi^m,y\right) \ \text{for some} \ n,m\in \mathbb N \right\},
$$
which we call \emph{the restricted orbit of $x$}. It is the subset of
the full $\phi$-orbit of $x$ restricted by the valency condition. A
purely atomic $\left(\Gamma^+_{\phi},c_g\right)$-conformal measure is
supported on at most countably many restricted orbits. It follows from
Theorem 2.4 in \cite{Th2} that a purely atomic
$\left(\Gamma^+_{\phi},c_g\right)$-conformal measure with exponent
$\beta$ defines an extremal $\beta$-KMS state if and only if it is supported on a single restricted orbit. But it is certainly not all restricted orbits which support a $\left(\Gamma^+_{\phi},c_g\right)$-conformal measure; in the terminology of \cite{Th2} they have to be $\beta$-summable for some $\beta \in \mathbb R\backslash \{0\}$. We shall determine all KMS states by proving existence and uniqueness of a non-atomic $\left(\Gamma^+_{\phi},c_g\right)$-conformal measure and by determining all $\Ro$-orbits that are $\beta$-summable for some $\beta \neq 0$.

%In addition we assume that $\Gamma^+_{\phi}$ is simple. By Theorem 5.21 in \cite{ST} this happens %if and only if $\phi$ is exact in the sense that for every non-open subset $U \subseteq \mathbb T$ %there is an $N \in \mathbb N$ such that $\phi^N(U) = \mathbb T$, and there are no exceptional %fixed point, i.e. for no non-critical fixed point $x$ for $\phi$ does it happen that $\phi^{-1}(x) %\backslash \{x\}$ only contains critical points. Thus from now on and throughout the paper we %impose these conditions on $\phi$. When the degree of $\phi$ is not $1$ or $-1$ the absence of an %exceptional fixed point is automatic, and then exactness is the only condition, cf. \cite{ST}.

\section{The unique non-atomic KMS state}\label{uu103}

The subset $R^+_{\phi} = \Gamma^+_{\phi}(0)$ is an open sub-groupoid of $\Gamma^+_{\phi}$ and in fact an \'etale equivalence relation in itself. The $C^*$-algebra $C^*_r\left(R^+_{\phi}\right)$ can be identified with a $C^*$-subalgebra of $C^*_r\left(\Gamma^+_{\phi}\right)$; more precisely with the fixed point algebra for the gauge action.

 When $\omega$ is a state of $C^*_r\left(\Gamma^+_{\phi}\right)$ or $C^*_r\left(R^+_{\phi}\right)$, its restriction to $C(\mathbb T)$ is given by integration with respect to a Borel probability measure, and we will say that $\omega$ is non-atomic when this measure is non-atomic and that $\omega$ is purely atomic when the measure is. In this section we combine results from \cite{S1}, \cite{S2} and \cite{ST} to prove the following

\begin{thm}\label{unique} There is exactly one non-atomic KMS-state for the gauge action on $C^*_r\left(\Gamma^+_{\phi}\right)$. The corresponding inverse temperature $\beta$ is the topological entropy of $\phi$, i.e. $\beta =h(\phi)$.
\end{thm}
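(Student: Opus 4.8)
The plan is to reduce the existence and uniqueness of a non-atomic KMS state to the existence and uniqueness of a non-atomic $\left(\Gamma^+_{\phi},c_g\right)$-conformal measure $\nu$, which by \refprop{decomp!} (taking $s=1$) is what must be produced, and then to identify the exponent $\beta$ with $h(\phi)$. First I would unravel the conformality condition (\ref{Gconf}) on open bi-sections of $\Gamma^+_{\phi}$. Since every bi-section is covered by pieces of the form where $s = \phi^{n}$ and $r = \phi^{k+n}$ agree after composition, the relation (\ref{Gconf}) becomes, locally, a Jacobian-type identity: for a branch of $\phi$ that is monotone on an interval $J$ with $\phi(J) = J'$, one needs $\nu(J) = e^{-\beta}\nu(J')$ (the sign of the exponent coming from $c_g = -1$ on the graph of a single application of $\phi$, read as an element with source in $J'$ and range in $J$). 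In other words, a non-atomic $\left(\Gamma^+_{\phi},c_g\right)$-conformal measure with exponent $\beta$ is exactly a non-atomic Borel probability measure $\nu$ satisfying $\nu(\phi(A)) = e^{\beta}\nu(A)$ whenever $\phi$ is injective on the Borel set $A$; equivalently $\nu$ is $e^{-\beta}$-conformal for the local branches of $\phi$, with no extra valency constraint since $\nu$ charges no point.

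The next step is to bring in the piecewise-linear model. Because $\phi$ is continuous, piecewise monotone, transitive (being exact) and has a turning point, the results invoked from \cite{S1}, \cite{S2} give a piecewise linear map $\psi$ with constant slope $\pm e^{h(\phi)}$ on each monotonicity interval, together with a homeomorphism $\theta : \mathbb T \to \mathbb T$ conjugating $\phi$ to $\psi$ (semi-conjugacy upgraded to conjugacy precisely under exactness/no wandering intervals). Normalized Lebesgue measure $m$ on the $\psi$-side visibly satisfies $m(\psi(A)) = e^{h(\phi)} m(A)$ on sets where $\psi$ is injective, since $\psi$ multiplies length by $e^{h(\phi)}$ on each branch; pulling back, $\nu_0 = \theta^{*}m = m\circ\theta$ is a non-atomic Borel probability measure with $\nu_0(\phi(A)) = e^{h(\phi)}\nu_0(A)$ on injectivity sets of $\phi$. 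Feeding $\nu_0$ and $\beta = h(\phi)$ into \refprop{decomp!} with $s=1$ then produces a non-atomic $h(\phi)$-KMS state, which also pins down the inverse temperature: any non-atomic conformal measure forces, by iterating the branch identity and using transitivity to compare arbitrary intervals, the slope constant $e^{\beta}$ of the associated linear model, and that constant is $e^{h(\phi)}$ by the variational characterization of entropy of piecewise monotone maps (\cite{S1}, \cite{S2}). Hence no non-atomic conformal measure can exist for $\beta \neq h(\phi)$.

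For uniqueness at $\beta = h(\phi)$, suppose $\nu$ is any non-atomic $\left(\Gamma^+_{\phi},c_g\right)$-conformal probability measure with exponent $h(\phi)$. The branch identity $\nu(\phi(A)) = e^{h(\phi)}\nu(A)$ says that $\nu$ assigns to each interval of monotonicity of $\phi^{n}$ a mass controlled by $e^{-nh(\phi)}$ times the mass of its image, and since $\phi$ is exact these images eventually cover $\mathbb T$; a Perron–Frobenius / bounded-distortion argument (the map being piecewise linearizable, distortion is in fact trivial in the model) shows the system of masses on the generating partition by monotonicity intervals of $\phi^{n}$ is uniquely determined. Concretely, transport $\nu$ through $\theta$ to a measure $\theta_{*}\nu$ on the $\psi$-side satisfying $(\theta_{*}\nu)(\psi(A)) = e^{h(\phi)}(\theta_{*}\nu)(A)$ on branches of $\psi$; because $\psi$ has constant slope $e^{h(\phi)}$, this identity forces $\theta_{*}\nu$ to have constant density with respect to Lebesgue measure on each branch, hence (by transitivity, matching the density across overlapping images) globally constant, so $\theta_{*}\nu = m$ and $\nu = \nu_0$. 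I expect the main obstacle to be precisely this last uniqueness step: justifying that the conformality relation on \emph{all} bi-sections, not just single branches, together with exactness, rigidly forces the Lebesgue density in the linear model — i.e. ruling out exotic non-atomic conformal measures that are singular with respect to $\nu_0$. This is where the absence of wandering intervals and the exactness of $\phi$ (ensuring the monotonicity partitions of $\phi^{n}$ generate the Borel $\sigma$-algebra up to $\nu$-null sets) must be used carefully, and it is also the point where the cited results of \cite{S1}, \cite{S2}, \cite{ST} on the linear model do the real work.
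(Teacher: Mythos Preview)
Your existence argument is essentially the paper's: pull back Lebesgue through the conjugacy to the uniformly piecewise linear model (Theorem~\ref{11}), check that the resulting measure is scaled by $\phi$ with factor $e^{h(\phi)}$, and observe that scaled measures are $(\Gamma^+_\phi,c_g)$-conformal. That part is fine.

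The gap is in your reduction of $(\Gamma^+_\phi,c_g)$-conformality to the scaling identity $\nu(\phi(A))=e^{\beta}\nu(A)$. You dismiss the valency constraint with ``no extra valency constraint since $\nu$ charges no point,'' but the valency condition in $\Gamma^+_\phi$ is not a pointwise obstruction at critical points: it is an orientation-tracking device on whole intervals. Concretely, $(x,1,\phi(x))\in\Gamma^+_\phi$ requires $\val(\phi^{n},x)=\val(\phi^{n-1},\phi(x))$ for some $n$, and since $\val(\phi^{n},x)=\val(\phi,x)\cdot\val(\phi^{n-1},\phi(x))$, this forces $\val(\phi,x)=(+,+)$. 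So the bi-sections of $\Gamma^+_\phi$ only give you $\nu(\phi(A))=e^{\beta}\nu(A)$ on \emph{increasing} branches, and $R^+_\phi$-invariance only relates increasing intervals to increasing intervals (and decreasing to decreasing) with the same image. The implication conformal $\Rightarrow$ scaled is therefore not the triviality you take it to be, and your subsequent ``constant density on each branch'' argument in the linear model never gets off the ground: you have not earned the scaling relation on decreasing branches.

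The paper handles exactly this orientation bookkeeping by a different route. Rather than trying to upgrade conformality to scaling, it observes that any non-atomic $\beta$-KMS state restricts to a non-atomic trace on the fixed-point algebra $C^*_r(R^+_\phi)$, and proves (Lemma~\ref{uu106}) that such a trace is unique. The proof builds a \emph{doubled} map $g:[0,2]\to[0,2]$ in which the second copy of $[0,1]$ encodes the decreasing branches; an $R^+_\phi$-invariant non-atomic measure on $\mathbb T$ is pushed to a measure on $[0,1]$ and then, via Shultz's disconnection of $g$, to a trace on $C^*_r(R_\sigma)$ for an exact local homeomorphism $\sigma$, where uniqueness of the trace is available from \cite{DS}. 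The doubled map is precisely the device that circumvents the orientation asymmetry you overlooked. The identification $\beta=h(\phi)$ is then obtained separately, again through a disconnection (this time of $f_0$ on $[0,1]$) and results of Shultz relating slope to entropy.
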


Since a KMS-state restricts to a tracial state on the fixed point algebra the uniqueness part of the statement in Theorem \ref{unique} is essentially a consequence of Proposition \ref{decomp!} and the following

\begin{lemma}\label{uu106} The $C^*$-algebra $C^*_r\left(R^+_{\phi}\right)$ has at most one non-atomic tracial state.
\end{lemma}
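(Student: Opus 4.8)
The plan is to show that a non-atomic tracial state on $C^*_r\left(R^+_{\phi}\right)$ is forced to restrict on $C(\mathbb T)$ to a very rigid measure, and that this measure is uniquely determined by the dynamics of $\phi$. First I would argue, exactly as in the proof of \refprop{decomp!}, that the set of points with non-trivial isotropy in $R^+_{\phi}$ is countable, so the tracial state $\tau$, being non-atomic, is supported on the part of the equivalence relation where $R^+_{\phi}$ looks principal; hence by the standard description of traces on étale equivalence-relation $C^*$-algebras (Neshveyev's results, or Theorem~2.4 of \cite{Th2} specialized to the cocycle $c_g|_{R^+_{\phi}} \equiv 0$) the trace is determined by the measure $\nu = \tau|_{C(\mathbb T)}$, which must be $R^+_{\phi}$-invariant, i.e. $\left(\Gamma^+_{\phi},c_g\right)$-conformal with exponent $0$ restricted to $R^+_{\phi}$. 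So the lemma reduces to: there is at most one non-atomic Borel probability measure on $\mathbb T$ invariant under the restricted relation $R^+_{\phi}$.

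Next I would turn this measure-theoretic statement into a statement about $\phi$ itself. The relation $R^+_{\phi} = \Gamma^+_{\phi}(0)$ identifies $x$ and $y$ precisely when $\phi^n(x) = \phi^n(y)$ with matching valency $\val(\phi^n,x) = \val(\phi^n,y)$ for some $n$; an $R^+_{\phi}$-invariant non-atomic measure $\nu$ is therefore pushed forward consistently under the branches of $\phi$, and one checks that $\phi_*\nu$ and $\nu$ are mutually absolutely continuous with a Radon-Nikodym derivative that is locally constant away from the (countably many) forward orbits of critical points, and in fact constant on each monotonicity interval because of the valency bookkeeping. This is the point where I would bring in \cite{S1}, \cite{S2}: a continuous piecewise monotone exact map is, by Parry-type / Milnor-Thurston theory, topologically conjugate via a homeomorphism $h : \mathbb T \to \mathbb T$ to a piecewise linear map $\psi$ of constant slope $\pm e^{h(\phi)}$, and the measure on $\mathbb T$ that is invariant (up to the conformal scaling) for the piecewise linear model is the pullback $h^* (\mathrm{Leb})$ of Lebesgue measure. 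Non-atomicity and exactness rule out the possibility of extra invariant measures: any non-atomic $R^+_{\phi}$-invariant measure transported through $h$ becomes a non-atomic measure on $\mathbb T$ that is scaled by the constant factor $e^{h(\phi)}$ under every linear branch of $\psi$, and such a measure has a density with respect to Lebesgue measure that satisfies a transfer-operator fixed-point equation whose only non-negative $L^1$ solution, by exactness (ergodicity of the model with respect to Lebesgue) and the Perron-Frobenius theorem for piecewise linear expanding-in-absolute-value maps, is the constant function. Hence $\nu = h^*(\mathrm{Leb})$ is the unique candidate, and the lemma follows.

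The main obstacle I expect is the second step: making precise the passage from ``$R^+_{\phi}$-invariant non-atomic measure'' to ``fixed point of a transfer operator for the piecewise linear model,'' because $\phi$ need not be locally injective at the critical points, the degree may be $1$ or $-1$, and the valency condition carves out only part of the full $\phi$-orbit equivalence. One must be careful that the homeomorphism $h$ from the Milnor-Thurston construction intertwines $R^+_{\phi}$ with the corresponding oriented relation of $\psi$ (this uses that conjugacy preserves the monotonicity structure and hence the valencies), and that the critical orbits — a $\nu$-null set since $\nu$ is non-atomic and these orbits are countable — can genuinely be discarded when solving the density equation. Once the invariant-density equation is set up on the piecewise linear side, uniqueness is classical, so the real work is in this reduction and in citing \cite{S1}, \cite{S2}, \cite{ST} correctly for the conjugacy and for the structure of $R^+_{\phi}$.
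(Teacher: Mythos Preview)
Your first step is fine and matches the paper: a non-atomic trace on $C^*_r(R^+_\phi)$ factors through $P$ and is given by a non-atomic $R^+_\phi$-invariant Borel probability measure on $\mathbb T$, so everything reduces to uniqueness of such a measure.

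The gap is in your second step. You assert that after transporting to the piecewise linear model $\psi$, an $R^+_\psi$-invariant non-atomic measure ``is scaled by the constant factor $e^{h(\phi)}$ under every linear branch of $\psi$'' and then has an $L^1$ density satisfying a transfer-operator equation. Neither claim follows from $R^+_\psi$-invariance. The relation $R^+_\psi = \Gamma^+_\psi(0)$ only tells you that $\nu(A)=\nu(B)$ when $\psi^n(A)=\psi^n(B)$ with \emph{matching valency}; it says nothing about how $\nu$ compares with $\psi_*\nu$, so you do not get a scaling constant for free, and you cannot set up a Perron--Frobenius or transfer-operator equation without first proving something equivalent to what you want. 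The usual transfer operator sums over \emph{all} preimages, whereas $R^+_\psi$ only links preimages of the same valency, so the connection you invoke is not there. Absolute continuity with respect to Lebesgue is likewise unproved: nothing rules out a priori a singular continuous $R^+_\psi$-invariant measure.

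The paper avoids exactly this difficulty, and the mechanism is worth noting because it is not the Milnor--Thurston conjugacy at all. It first records the valency-restricted invariance as a lemma (if $\phi^k(A)\sim\phi^k(B)$ with both valencies $(+,+)$ then $\mu(A)=\mu(B)$), then introduces a doubled map $g$ on $[0,2]$ whose two halves encode the $(+,+)$ and $(-,-)$ valencies separately, and passes to Shultz's \emph{disconnection} $(X,\sigma)$ of $g$. The point of the disconnection is that $\sigma$ is a genuine local homeomorphism, so the Renault--Deaconu tail relation $R_\sigma$ is available. The $R^+_\phi$-invariant measure becomes an $R_\sigma|_{X_0}$-invariant measure on an open subset $X_0\subseteq X$, hence a trace on the hereditary subalgebra $C^*_r(R_\sigma|_{X_0})$; exactness of $\sigma$ (proved separately) makes $C^*_r(R_\sigma)$ simple, so the trace extends, and Deaconu--Shultz give uniqueness of the trace on $C^*_r(R_\sigma)$. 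Uniqueness then pulls back. No scaling property, no absolute continuity, and no transfer operator are needed --- the $C^*$-algebraic uniqueness result for the local-homeomorphism model does the work that your Perron--Frobenius step was supposed to do.
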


The proof of this lemma requires some preparations. It follows from \cite{Re}, \cite{N} that a tracial state $\omega$ on  $C^*_r\left(R^+_{\phi}\right)$ factorises through the conditional expectation $P$, i.e. it is given by 
$$
\omega(a) = \int_{\mathbb T} P(a) \ d\mu
$$ 
for some probability measure $\mu$ on $\mathbb T$ which is $R^+_{\phi}$-invariant in the sense that
\begin{equation*}\label{uu107}
\mu\left(s(W)\right) = \mu\left(r(W)\right)
\end{equation*}
for every open bi-section $W \subseteq R^+_{\phi}$. To prove Lemma \ref{uu106} we must therefore show a non-atomic $R^+_{\phi}$-invariant Borel probability measure is unique. Let therefore $\mu$ be such a measure.

In the following we will write $A
 \sim B$ between two sets when $\left(A \backslash B\right) \cup \left(B \backslash A\right)$ is at most countable.

\begin{lemma}\label{uu216} Assume that $A,B$ are Borel subsets of $\mathbb T$ and $k \in \mathbb N \backslash \{0\}$ is such that $\phi^k$ is injective on both $A$ and $B$, and $\phi^k(A) \sim \phi^k(B)$. Assume also that $\val(\phi^k,x) = \val(\phi^k,y) = (+,+)$ for all $x\in A, y \in B$. It follows that $\mu(B)  = \mu(A)$.
\end{lemma}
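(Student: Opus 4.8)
The plan is to realise the partial bijection $x\mapsto (\phi^k|_B)^{-1}(\phi^k(x))$ between (most of) $A$ and (most of) $B$ as a single Borel bi-section of $R^+_{\phi}$ and then to transport $\mu$ along it, using that $\mu$ is $R^+_{\phi}$-invariant. First I would set $U = \{x\in\mathbb T : \val(\phi^k,x) = (+,+)\}$, which is open because strict monotonicity of $\phi^k$ near a point persists on a neighbourhood; by hypothesis $A,B\subseteq U$. Then I would introduce
\[
W \;=\; \Gamma^+_{\phi}(0,k)\cap r^{-1}(A)\cap s^{-1}(B)\;\subseteq\;R^+_{\phi},
\]
where $\Gamma^+_{\phi}(0,k)$ consists of the triples $(x,0,y)$ with $\phi^k(x) = \phi^k(y)$ and $\val(\phi^k,x) = \val(\phi^k,y)$, and is an open subset of $R^+_{\phi} = \Gamma^+_{\phi}(0)$ by the description of the topology in the excerpt. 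Since $A,B\subseteq U$, a triple lies in $W$ exactly when $x\in A$, $y\in B$ and $\phi^k(x) = \phi^k(y)$; injectivity of $\phi^k$ on $A$ and on $B$ makes $r$ and $s$ injective on $W$, so, $A,B$ being Borel and $\Gamma^+_{\phi}(0,k)$ open, $W$ is a Borel bi-section of $R^+_{\phi}$, with $r(W) = (\phi^k|_A)^{-1}(\phi^k(A)\cap\phi^k(B))$ and $s(W) = (\phi^k|_B)^{-1}(\phi^k(A)\cap\phi^k(B))$.

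Next I would observe that $A\setminus r(W)$ and $B\setminus s(W)$ are, by injectivity of $\phi^k$ on $A$ and on $B$, preimages of subsets of the at most countable symmetric difference of $\phi^k(A)$ and $\phi^k(B)$, hence countable; since $\mu$ is non-atomic this gives $\mu(r(W)) = \mu(A)$ and $\mu(s(W)) = \mu(B)$, and it remains to prove $\mu(r(W)) = \mu(s(W))$. This is where the real work lies: the hypothesis on $\mu$ supplies $\mu(s(V)) = \mu(r(V))$ only for \emph{open} bi-sections $V\subseteq R^+_{\phi}$, and I need it for the Borel bi-section $W$. I would therefore prove the general statement that an $R^+_{\phi}$-invariant measure is invariant under every Borel bi-section: using that $R^+_{\phi}$ is étale and second countable, cover it by countably many open bi-sections $V_j$ and partition $W = \bigsqcup_j W_j$ with $W_j = (W\cap V_j)\setminus\bigcup_{l<j}V_l$ a Borel subset of $V_j$; for an open bi-section $V$ the partial homeomorphism $\tau_V = s\circ r_V^{-1}$ satisfies $\mu(\tau_V(O)) = \mu(O)$ for every open $O\subseteq r(V)$ (apply the hypothesis to the open bi-section $r_V^{-1}(O)$), whence the finite Borel measures $\mu$ and $E\mapsto\mu(\tau_V(E))$ on $r(V)$ agree on open sets and therefore on all Borel subsets of $r(V)$; applying this to each $W_j\subseteq V_j$ gives $\mu(s(W_j)) = \mu(r(W_j))$, and summing over the pieces $r(W_j)$, $s(W_j)$ — which are pairwise disjoint and exhaust $r(W)$, resp. $s(W)$, because $r$ and $s$ are injective on $W$ — yields $\mu(r(W)) = \mu(s(W))$.

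Combining the two steps gives $\mu(A) = \mu(r(W)) = \mu(s(W)) = \mu(B)$, as claimed. I expect the main obstacle to be exactly this passage from open to Borel bi-sections in the invariance of $\mu$; the construction of $W$ and the handling of the countable exceptional sets (where non-atomicity of $\mu$ enters) should be routine.
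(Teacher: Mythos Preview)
Your proof is correct and follows the same overall strategy as the paper's: discard the countable symmetric difference using non-atomicity, realise the induced bijection between $A$ and $B$ inside open bi-sections of $R^+_{\phi}$, and upgrade the open-bi-section invariance of $\mu$ to Borel subsets. The one genuine difference is in the decomposition. The paper exploits the concrete structure of piecewise monotone circle maps: it takes the finitely many maximal open intervals $I_i$ on which $\phi^k$ is increasing, so that the transition homeomorphisms $\eta_{ij}$ between them (determined by $\phi^k\circ\eta_{ij}=\phi^k$) give explicit open bi-sections, and then partitions $\phi^k(A)=\phi^k(B)$ into finitely many Borel pieces $X_{i,j}\subseteq\phi^k(A\cap I_i)\cap\phi^k(B\cap I_j)$ according to which interval contains the $A$- and $B$-preimage. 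You instead build a single Borel bi-section $W$ and cover it by countably many open bi-sections via second countability of the \'etale groupoid; this argument is more general --- it works verbatim for any \'etale equivalence relation and any invariant measure --- but trades the finite decomposition for a countable one. The paper handles the passage from open to Borel invariance in one word (``regularity''), whereas you rightly flag it as the main technical point and spell it out; both justifications are valid.
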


\begin{proof} By removing countable subsets from $A$ and $B$ we can arrange that the two sets $\phi^k(A)$ and $\phi^k(B)$ agree exactly. Since $\mu$ is non-atomic we may therefore assume that this is the case. Let $\{I_i\}$ be the maximal open intervals on the circle $\mathbb T$ where $\phi^k$ is increasing. Set $A_{i,j} =  \phi^k\left(I_i\right) \cap \phi^k\left(I_j\right)$. If $A_{i,j} \neq \emptyset$ we can define an orientation preserving homeomorphism $\eta_{ij} : \phi^{-k}\left( \phi^k\left(I_j\right)\right) \cap I_i \to \phi^{-k}\left( \phi^k\left(I_i\right) \right) \cap I_j$ such that $\phi^k\left(\eta_{ij}(t)\right) = \phi^k(t)$. Then $\left\{(t,0,\eta_{ij}(t))  : t \in \phi^{-k}\left( \phi^k\left(I_j\right)\right) \cap I_i \right\}$ is an open bi-section in $R^+_{\phi} \subseteq \Gamma^+_{\phi}$. Since $\mu$ is $R^+_{\phi}$-invariant it follows that $\mu(\eta_{ij}(V)) = \mu(V)$ for every open subset $V \subseteq \phi^{-k}\left( \phi^k\left(I_j\right)\right) \cap I_i$. By regularity the same is true for all Borel subsets $V$ of $\phi^{-k}\left( \phi^k\left(I_j\right)\right) \cap I_i$. Write $\phi^k(A)$ as a disjoint union $\phi^k(A) = \sqcup X_{i,j}$ where $X_{i,j} \subseteq \phi^k\left(A \cap I_i\right) \cap \phi^k\left(B \cap I_j\right)$ are Borel sets. Then 
\begin{equation*}
\begin{split}
&\mu(B) = \sum_{i,j} \mu\left( \phi^{-k}\left(X_{i,j}\right) \cap B\right) = 
\sum_{i,j} \mu\left( \eta_{ij}\left(\phi^{-k}\left(X_{i,j}\right) \cap A\right)\right) \\
&=
 \sum_{i,j} \mu\left( \phi^{-k}\left(X_{i,j}\right) \cap A\right) = \mu\left(A\right) .
\end{split} 
 \end{equation*}

\end{proof}

To simplify notation we set $p(t) = e^{2 \pi i t}$ when $t \in \mathbb
R$. For $t \in [0,1]$, set $f_0(t) = \kappa \circ \phi\circ p(t)$
where $\kappa : \mathbb T \to [0,1[$ is the (dis-continuous) inverse of $[0,1[ \ni t
\mapsto e^{2\pi it}$. Let $\mathcal C_1$ be the critical points for
$\phi$ and choose an element $c \in \kappa\left(\mathcal C_1\right)$. Define $g : [0,2] \to [0,2]$ such that
\begin{equation*}\label{uu204}
g(t) = \begin{cases} f_0(t) & \ \text{when} \  t \in [0,1], \
  \val\left(\phi, p(t)\right) = (+,+) \\ f_0(t) +1 & \text{when} \ t
  \in [0,1], \ \val\left(\phi, p(t)\right) = (-,-) \\  f_0(t-1)+1 & \
  \text{when} \  t \in [1,2], \ \val\left(\phi, p(t)\right) = (+,+) \\
  f_0(t-1)  & \text{when} \ t \in [1,2], \ \val\left(\phi, p(t)
  \right) = (-,-) \\ c & \text{when} \ \val \left(\phi, p(t)\right)
  \in \left\{(+,-),(-,+)\right\}  \end{cases}
\end{equation*}
Then 
\begin{equation}\label{uu205}
p \circ g(t) = \phi \circ p(t), \ \ \ \ \ \ \ \ t \notin p^{-1}\left(\mathcal C_1\right).
\end{equation}

Since $\mu$ is non-atomic we can define a Borel probability measure $\tilde{\mu}$ on $[0,1]$ such that 
$$
\tilde{\mu}(A) = \mu(p(A)).
$$

\begin{lemma}\label{u1} Let $A,B$ be Borel subsets of $[0,1]$ and $k \in \mathbb N$ a natural number such that $g^k$ is injective on both $A$ and $B$, $g^k(A), g^k(B) \subseteq [0,1]$ and $g^k(A) \sim g^k(B)$. Then $\tilde{\mu}(A) = \tilde{\mu}(B)$.
\end{lemma}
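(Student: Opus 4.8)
The plan is to reduce Lemma \ref{u1} to the already-established Lemma \ref{uu216} by comparing the dynamics of $g$ on $[0,1]$ with the dynamics of $\phi$ (equivalently $\phi^k$) on the circle via the map $p$. The essential point is the semiconjugacy (\ref{uu205}): away from the finite set $p^{-1}(\mathcal C_1)$ of critical points we have $p\circ g = \phi\circ p$, and $g$ is built so that it records the valency information — on the two copies $[0,1]$ and $[1,2]$ of the "unfolded" circle it distinguishes the orientation-preserving from the orientation-reversing branches, and collapses turning points to the constant value $c$. First I would note that since $\mu$ (hence $\tilde\mu$) is non-atomic, all the exceptional finite or countable sets — the preimages $\bigcup_{j<k}g^{-j}(p^{-1}(\mathcal C_1))$ under which (\ref{uu205}) might fail after iterating, the points where $g^k$ is not continuous or not given by an honest branch of $\phi^k$, and the countable symmetric difference hidden in $\sim$ — can all be discarded without changing any $\tilde\mu$-measure. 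So I may assume $A$ and $B$ avoid all these sets, and that $g^k(A)=g^k(B)$ exactly.

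Next I would translate the hypotheses. After removing the exceptional sets, $p^k$ intertwines $g^k$ with $\phi^k$ in the sense that $p\circ g^k = \phi^k\circ p$ on $A\cup B$. The hypothesis that $g^k$ is injective on $A$ forces $\phi^k$ to be injective on $p(A)$: indeed $g^k$ injective plus the bookkeeping of valencies in the definition of $g$ means that along the way each point of $A$ followed a definite sequence of monotone branches of $\phi$, so $\phi^k$ restricted to $p(A)$ is a composition of injective branch maps, hence injective; similarly on $p(B)$. Moreover, by iterating the case analysis in the definition of $g$, the condition $g^k(A)\subseteq[0,1]$ together with injectivity pins down $\val(\phi^k,x)$ for $x\in p(A)$: the first coordinate of the valency is determined by whether the relevant one-sided iterates increase or decrease, and the fact that $g^k$ lands back in $[0,1]$ rather than $[1,2]$ records that the total orientation is preserving, giving $\val(\phi^k,x)=(+,+)$ for all $x\in p(A)$, and likewise $\val(\phi^k,y)=(+,+)$ for all $y\in p(B)$. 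Finally $g^k(A)=g^k(B)$ and $p\circ g^k=\phi^k\circ p$ give $\phi^k(p(A))=\phi^k(p(B))$, so in particular $\phi^k(p(A))\sim\phi^k(p(B))$.

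With $p(A)$, $p(B)$ and $k$ now verified to satisfy exactly the hypotheses of Lemma \ref{uu216}, that lemma yields $\mu(p(B))=\mu(p(A))$, i.e. $\tilde\mu(B)=\tilde\mu(A)$, completing the proof. The step I expect to be the main obstacle — and the one deserving the most care in writing — is the bookkeeping in the previous paragraph: checking rigorously that the piecewise definition of $g$, when iterated $k$ times, faithfully encodes both the injectivity of the composite branch of $\phi^k$ and the full valency $\val(\phi^k,\cdot)=(+,+)$, including handling the points that hit a turning point at some intermediate step (these get sent to the constant $c$, and one must argue such points form a $\tilde\mu$-null set that has already been excluded, consistently with the "injective on $A$" hypothesis which in fact rules out all but at most one of them). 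Everything else is the routine discarding of countable sets permitted by non-atomicity of $\mu$.
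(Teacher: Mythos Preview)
Your proposal is correct and follows essentially the same route as the paper: discard the finite set $\bigcup_{j=0}^{k} g^{-j}\bigl(p^{-1}(\mathcal C_1)\cup\{0,1\}\bigr)$ (the paper also throws out preimages of the endpoints $0,1$ so that $p$ is injective and the sets sit in $]0,1[$), use the semiconjugacy (\ref{uu205}) to transport injectivity of $g^k$ to injectivity of $\phi^k$ on $p(A_0),p(B_0)$, read off $\val(\phi^k,\cdot)=(+,+)$ from the fact that the $g$-orbit stays in the first copy $[0,1[$, and then invoke Lemma \ref{uu216}. The only point you underspecify is the removal of $\{0,1\}$ and its $g$-preimages, which is exactly what guarantees $p$ is injective on the trimmed sets and that the landing sets lie in the open interval; once you add that, your sketch matches the paper's proof.
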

\begin{proof} Note that $ \bigcup_{j=0}^k g^{-j}\left( p^{-1}(\mathcal C_1) \cup \{0,1\}\right)$ is a finite set, and let 
$$
A_0 = A \backslash \bigcup_{j=0}^k g^{-j}\left( p^{-1}(\mathcal C_1) \cup \{0,1\}\right), \ \  \   B_0 = B \backslash \bigcup_{j=0}^k g^{-j}\left( p^{-1}(\mathcal C_1) \cup \{0,1\}\right).
$$ 
Then $p$ is injective on $A_0$ and $B_0$, and it follows from (\ref{uu205}) that $\phi^k$ is injective on $p(A_0)$ and $p(B_0)$. Furthermore, $g$ is defined such that $\val\left(\phi^k,x\right) = \val\left(\phi^k,y\right) = (+,+)$ for all $x\in p(A_0), y \in p(B_0)$ because $A_0,B_0, g^k(A_0)$ and $g^k(B_0)$ are subsets of $[0,1[$. Then Lemma \ref{uu216} implies that $\tilde{\mu}(A) = \tilde{\mu}(A_0) = \tilde{\mu}(B_0) = \tilde{\mu}(B)$.
\end{proof}

Consider then the 'disconnection' $(X,\sigma)$ of the map $g : [0,2] \to [0,2]$ as introduced by F. Shultz in \cite{S1}, cf. Definition 2.1 of \cite{S1}. From Propositions 2.2 and 2.3 in \cite{S2} we cite the following facts.
\begin{enumerate}
\item[1)] $X$ is a compact metric space and $\sigma : X \to X$ is a local homeomorphism.
\item[2)] There is a continuous surjection $\pi : X \to [0,2]$ and countable subsets $I_1 \subseteq [0,2]$ and $X_1 \subseteq X$ such that $\pi|_{X \backslash X_1}$ is a conjugacy from $(X\backslash X_1,\sigma)$ onto $([0,2] \backslash I_1,g)$.
\item[3)] $X_1$ is totally $\sigma$-invariant, i.e. $\sigma^{-1}(X_1) = X_1$.
\end{enumerate}

\begin{lemma}\label{u2} $\sigma$ is exact.
\end{lemma}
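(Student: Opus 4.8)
The plan is to deduce exactness of $\sigma$ from exactness of $\phi$ by pushing it through the two factor maps that separate the two systems: the factor map $\pi\colon X\to[0,2]$ of the disconnection and the two–to–one map $p\colon[0,2]\to\mathbb T$. Since the basic clopen subsets of $X$ form a basis for its topology, it is enough to show that for every nonempty clopen $U\subseteq X$ there is an $n$ with $\sigma^n(U)=X$. Shrinking $U$ to a clopen subset of $X\setminus X_1$ and using 2) together with the fact that $X$ has no isolated points (which follows from Shultz's construction, since $\phi$, being exact, has no attracting periodic orbit), one finds a nonempty open $G\subseteq[0,2]$ with $\pi(U)\supseteq G\setminus I_1$. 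I would then establish the following version of exactness for $g$: \emph{for every nonempty open $G\subseteq[0,2]$ there is an $n$ with $g^n(G)\supseteq[0,2]\setminus F$ for some finite set $F$.} Granting this, 2), 3) and the relation $\pi\circ\sigma=g\circ\pi$ (which holds on $X\setminus X_1$) give $\sigma^n(U)\supseteq X\setminus\pi^{-1}(C)$ for a countable set $C$; since $\pi^{-1}$ of a point is finite and $\sigma^n(U)$ is clopen, $X\setminus\sigma^n(U)$ is then a countable clopen subset of the perfect compact metric space $X$, hence empty.

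To prove the $g$-version I would argue in three steps. \emph{Step 1.} As $p$ is open, $p(G)$ is nonempty open, so exactness of $\phi$ gives an $m$ with $\phi^m(p(G))=\mathbb T$; because $p\circ g=\phi\circ p$ off the finite set $p^{-1}(\mathcal C_1)$ and $g$ is finite-to-one (piecewise strictly monotone with finitely many laps away from finitely many points), $g^m(G)$ then contains an open set $O$ with $p(O)$ co-finite in $\mathbb T$. \emph{Step 2.} Here the hypothesis that $\phi$ is \emph{not} locally injective enters: I would show that finitely many further iterates of $g$ carry $O$ onto a set containing a full fibre $p^{-1}(V_0)$ for some nonempty open $V_0\subseteq\mathbb T$. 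The mechanism is that $g$ is built so that at a turning point of $\phi$ the adjacent increasing branch and decreasing branch are sent into the two different ``halves'' $[0,1[$ and $[1,2[$ of $[0,2]$; since $O$ surjects onto $\mathbb T$ up to a finite set via the two-to-one map $p$, this twist forces the image eventually to cover both halves over some arc, i.e.\ to contain $p^{-1}(V_0)$. \emph{Step 3.} Finally, $g$ commutes with the sheet-interchanging involution of the covering $p$ away from the finitely many critical points, so one checks inductively that $g^j\bigl(p^{-1}(V_0)\bigr)\supseteq p^{-1}\!\bigl(\phi^j(V_0)\setminus F_j\bigr)$ with $F_j$ finite; choosing $j$ with $\phi^j(V_0)=\mathbb T$ (exactness of $\phi$ once more) yields $g^j\bigl(p^{-1}(V_0)\bigr)\supseteq[0,2]\setminus p^{-1}(F_j)$, which is $[0,2]$ minus a finite set.

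I expect the crux to be Step 2, i.e.\ making the ``twist'' quantitative. One has to follow the orbit of $O$ and decide at each stage whether the relevant open set genuinely double-covers an arc under $p$ (in which case we already have our $p^{-1}(V_0)$) or is merely a one-sheeted ``section'' of $p$; the content is that such a section cannot persist indefinitely under $g$, and this is exactly where one uses that $\phi$, failing to be locally injective, has turning points of both kinds (local maxima and local minima) and analyses the effect of $g$ on the sheet structure near them. The remaining work --- keeping track of the finite exceptional sets of Step 1, of the countable sets $X_1,I_1$ in the passage between $X$ and $[0,2]$, and recording that $\pi$ has finite fibres and $X$ is perfect (both immediate from Shultz's construction) --- is routine.
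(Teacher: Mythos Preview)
Your strategy is sound and reaches the goal, but it is considerably more roundabout than the paper's, and there is one small slip.

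The paper avoids your entire lifting argument (clopen sets, perfectness of $X$, countable complement) by invoking Proposition~2.7 of \cite{S2}, which says that $\sigma$ is exact precisely when $g$ is \emph{topologically exact} in the sense of the multivalued map $\hat g$ of \cite{S1}. So it suffices to show that for every nonempty open $U\subseteq[0,2]$ there is $n$ with $\hat g^{\,n}(U)=[0,2]$; the passage from ``$g^n(U)$ misses only finitely many points'' to ``$\hat g^{\,n}(\overline U)=[0,2]$'' is then a one-line closure argument. This is exactly the statement you isolate as the $g$-version, so the two proofs converge there, but the paper's route to $\sigma$ is shorter.

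For the $g$-version itself, the paper collapses your three steps into one, and in particular makes your ``crux'' Step~2 completely explicit via valencies. Starting from an interval $I_0$ in one sheet, exactness of $\phi$ produces a critical point of $\phi^N$ inside $p(I_0)$, which immediately yields subintervals $I_+,I_-\subseteq p(I_0)$ with $\val(\phi^N,\cdot)=(\pm,\pm)$ on $I_\pm$ and $\phi^N(I_+)=\phi^N(I_-)$. After a further $l$ iterates with $\phi^{N+l}(I_\pm)=\mathbb T$, one sets $J_\pm=\{x\in I_+\cup I_-:\val(\phi^{N+l},x)=(\pm,\pm)\}$. The point is that the valency $\val(\phi^{N+l},x)$ determines which half of $[0,2]$ contains $g^{N+l}(\kappa(x))$, and because $I_+$ and $I_-$ already carry opposite $\phi^N$-valencies while sharing the same $\phi^N$-image, each $y\in\mathbb T$ (outside a finite set) has preimages in \emph{both} $J_+$ and $J_-$. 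Thus both sheets are covered co-finitely in one stroke; your sheet-interchanging involution and the inductive Step~3 are not needed.

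One small correction: you cannot ``shrink $U$ to a clopen subset of $X\setminus X_1$'', since $X_1$, though countable, is typically dense in $X$ and so $X\setminus X_1$ contains no nonempty clopen set. What you actually need (and what follows directly from property~2)) is the weaker fact that $\pi(U\setminus X_1)$ is open in $[0,2]\setminus I_1$, hence of the form $G\setminus I_1$ for some nonempty open $G$; the rest of your argument then goes through with the countable exceptional set enlarged by $X_1$.
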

\begin{proof} By Proposition 2.7 in \cite{S2} we must show that $g$ is topologically exact as defined in Definition 2.6 of \cite{S2}. Consider therefore an open non-empty subset $U$ of $[0,2]$. Then $U \cap ]0,1[ \neq \emptyset$ or $U \cap ]1,2[ \neq \emptyset$ and there is an open non-empty interval $I_0$ such that $I_0 \subseteq \overline{I_0} \subseteq U \cap ]0,1[$ or $I_0 \subseteq \overline{I_0} \subseteq U \cap ]1,2[$. Since $p(I_0)$ has non-empty interior in $\mathbb T$ there is an open non-empty interval $I \subseteq p(I_0)$. Since $\phi$ is exact there is an $N \in \mathbb N$ such that $\phi^{N-1}(I) = \mathbb T$. In particular, $I$ contains a critical point for $\phi^N$ and there are therefore open non-empty intervals $I_+,I_- \subseteq I$ such that $\val\left(\phi^N,x \right) = (\pm,\pm)$ for all $x \in I_{\pm}$ and $\phi^N\left(I_+\right) = \phi^N\left(I_-\right)$. Since $\phi$ is exact there is an $l \in \mathbb N$ such that $\phi^{N+l}\left(I_+\right) = \phi^{N+l}\left(I_-\right) = \mathbb T$. Set 
\begin{equation}\label{sets}
J_{\pm} = \left\{ x \in I_+ \cup I_- : \ \val\left(\phi^{N+l},x\right) = (\pm,\pm) \right\} .
\end{equation}
Then $\mathbb T \backslash \phi^{N+l}\left(J_+\right)$ and $\mathbb T \backslash \phi^{N+l}(J_-)$ are both finite set. Since
$p\left(g^{N+l}(I_0)\cap [0,1]\right) \supseteq \phi^{N+l}(J_+)$ when $I_0 \subseteq ]0,1[$, and $p\left(g^{N+l}(I_0)\cap [0,1]\right) \supseteq \phi^{N+l}(J_-)$ when $I_0 \subseteq ]1,2[$,
we conclude that $[0,1] \backslash g^{N+l}(I_0)$ is a finite set. Similarly, since $p\left(g^{N+l}(I_0)\cap [1,2]\right) \supseteq \phi^{N+l}(J_-)$ when $I_0 \subseteq ]0,1[$, and $p\left(g^{N+l}(I_0)\cap [1,2]\right) \supseteq \phi^{N+l}(J_+)$ when $I_0 \subseteq ]1,2[$,
we conclude that $[1,2] \backslash g^{N+l}(I_0)$ is a finite set. It follows that $[0,2] \backslash g^{N+l}(I_0)$ is a finite set. Let $\hat{g}$ be the (multivalued) map of Definition 2.6 in \cite{S1}. Then $g^{N+l}(I_0) \subseteq \hat{g}^{N+l}(\overline{I_0})$, and the latter set is closed. It follows that $\hat{g}^{N+l}(\overline{I_0}) = [0,2]$ and hence that $\hat{g}^{N+l}(U) = [0,2]$.
\end{proof}

We can now complete the \emph{proof of Lemma \ref{uu106}}: Set $X_0 = \pi^{-1}\left(]0,1[\right)$; an open non-empty subset of $X$. Since $\pi : X \backslash X_1 \to [0,2] \backslash I_1$ is a conjugacy, we can define a Borel probability measure $\hat{\mu}$ on $X_0$ such that  
$$
\hat{\mu}(B) = \tilde{\mu} \left( \pi \left(B \backslash X_1\right)\right) .
$$
 Since $X \backslash X_1$ is totally $g$-invariant and $X_1$ countable it follows from Lemma \ref{u1} that $\hat{\mu}(A) = \hat{\mu}(B)$ when $A,B$ are Borel subsets of $X_0$ such that $\sigma^k(A) = \sigma^k(B)\subseteq X_0$ and $\sigma^k$ is injective on both $A$ and $B$. Thus $\hat{\mu}$ is a Borel probability measure on $X_0$ which is invariant under the reduction $R_{\sigma}|_{X_0}$ to $X_0$ of the \'etale groupoid
$$
R_{\sigma} = \left\{ (x,y) \in X \times X : \ \sigma^n(x) = \sigma^n(y) \ \text{for some} \ n \in \mathbb N\right\} .
$$
It follows that $\hat{\mu}$ gives rise to a bounded trace on $\omega_{\mu}$ on $C^*_r\left(R_{\sigma}|_{X_0}\right)$ defined such that
$$
\omega_{\mu}(f) = \int_{X_0} f (x,x) \ d\hat{\mu}(x)
$$
when $f \in C_c\left(R_{\sigma}|_{X_0}\right)$. Now observe that $C^*_r\left(R_{\sigma}|_{X_0}\right)$ is a hereditary $C^*$-subalgebra of $C^*_r\left(R_{\sigma}\right)$; in fact, $C^*_r\left(R_{\sigma}|_{X_0}\right)$ is the closed linear span of $C_0(X_0)C^*_r\left(R_{\sigma}\right)C_0(X_0)$. Since $\sigma$ is exact by Lemma \ref{u2} it follows from Proposition 4.1 in \cite{DS} that $C^*_r\left(R_{\sigma}\right)$ is simple. It follows therefore from Lemma 4.6 in \cite{CP} that $\omega_{\mu}$ extends to a bounded trace on $C^*_r\left(R_{\sigma}\right)$. By combining Lemma \ref{u2} with Corollary 10.6 of \cite{DS} we see that $C^*_r\left(R_{\sigma}\right)$ has a unique trace state, so we conclude that if we have two non-atomic $R^+_{\phi}$-invariant Borel probability measures, $\mu_1$ and $\mu_2$, on $\mathbb T$, the two measures $\hat{\mu_1}$ and $\hat{\mu_2}$ on $X_0$ are proportional and therefore identical. This finishes the proof: For any Borel subset $B$ of $\mathbb T$ there is Borel subset $B_0$ of $X_0$ such that $p\circ \pi(B_0 \backslash X_1) \sim B$. By construction $\mu_i(B) = \hat{\mu_i}(B_0)$ and we conclude therefore that $\mu_1 = \mu_2$. $\qed$

\smallskip

We turn now to the construction of a non-atomic KMS state. Let $a > 0$. A continuous function $g : [0,1] \to \mathbb R$ is \emph{uniformly
  piecewise linear with slope $a$} when there are points $0 = c_0 < c_1 <
c_2 < \dots < c_N = 1$ such that $g$ is linear with slope $\pm a$ on
each interval $\left[c_{i-1},c_i\right], i =1,2, \dots, N$. We say that $\phi$ is \emph{uniformly
  piecewise linear with slope $a$} when its lift $f$ is. It was shown in \cite{ST} how to obtain the following conclusion from the work of F. Shultz, \cite{S2}.

\begin{thm}\label{11} There is an orientation-preserving homeomorphism
  $ k : \mathbb T \to
  \mathbb T$ such that $k \circ \phi \circ k^{-1}$ is uniformly piecewise
  linear with slope $a > 1$.
\end{thm}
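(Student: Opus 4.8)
The plan is to deduce Theorem~\ref{11} from the disconnection machinery of F. Shultz together with the exactness established in Lemma~\ref{u2}. The key point is that the map $g : [0,2]\to[0,2]$ built above from $\phi$ is, up to the countable exceptional sets, a model for a local homeomorphism on the circle: indeed, identifying $0$ and $2$ turns $[0,2]$ into a circle and $g$ descends (away from the critical points of $\phi$) to a degree-two-ish local homeomorphism whose disconnection $(X,\sigma)$ is an exact local homeomorphism of a compact metric space. So the first step is to invoke the results of \cite{S2}: for an exact, piecewise monotone, non-invertible map on the circle (or interval) there is a conjugacy to a uniformly piecewise linear map whose slope has absolute value equal to $e^{h}$, where $h$ is the topological entropy, and $h>0$ precisely because $\phi$ is exact and not locally injective (so it has genuine expansion). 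Thus $a = e^{h(\phi)} > 1$ is forced.

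Concretely, I would first record that $\phi$ is semiconjugate, and in fact conjugate, to a piecewise linear model by applying Shultz's theorem to $\phi$ itself rather than to the auxiliary $g$; the role of $g$ and its disconnection was only to run the uniqueness argument for the trace, whereas for Theorem~\ref{11} we may cite \cite{S2} directly for continuous piecewise monotone exact circle maps. The second step is to verify the hypotheses of the relevant theorem in \cite{S2}: continuity, piecewise monotonicity, and topological exactness of $\phi$ — all of which are standing assumptions — and non-invertibility, which holds since $\phi$ has a critical point. The third step is to identify the slope: Shultz's construction produces a Markov-type piecewise linear model, and the slope is the exponential of the topological entropy; since an exact piecewise monotone map with a turning point has strictly positive entropy, the slope $a$ satisfies $a>1$. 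Finally, one checks that the conjugating map can be taken orientation preserving: the natural homeomorphism coming from the disconnection (pushing forward the measure-theoretic ordering, or equivalently the Milnor--Thurston kneading construction) respects the counter-clockwise orientation of $\mathbb T$, so no orientation-reversing adjustment is needed.

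The main obstacle I expect is bookkeeping rather than conceptual: the results in \cite{S2} are stated for interval maps or for a particular normalization, and one must transfer them cleanly to the circle while keeping track of the degree of $\phi$ (which may be any integer, including $1$ or $-1$) and of the fact that the "unit circle" obtained by gluing the endpoints of $[0,2]$ is a double cover of $\mathbb T$ in the degenerate degree cases. In those low-degree cases one should argue directly on $\mathbb T$: the transition matrix associated to the partition by critical points and their forward orbits is irreducible and aperiodic because $\phi$ is exact, so the Perron--Frobenius eigenvalue $\lambda = e^{h(\phi)}$ exceeds $1$, and rescaling the Markov partition intervals by the corresponding left eigenvector produces the uniformly piecewise linear model with slope $\lambda$. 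The homeomorphism $k$ is then the map sending each point of $\mathbb T$ to its "address" in the rescaled partition, and orientation-preservation is immediate from the construction. I would therefore present the proof as: reduce to \cite{S2} (or to the Milnor--Thurston/Parry kneading theory) for the existence of the conjugacy and the value of the slope, use exactness to conclude $a>1$, and observe that the canonical conjugacy is orientation preserving.
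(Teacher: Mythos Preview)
The paper does not prove this theorem; it simply records that it was shown in \cite{ST} how to obtain the statement from Shultz's work \cite{S2}, and then states the result without further argument. Your proposal is consistent with that citation---you correctly identify Shultz's conjugacy results for exact piecewise monotone maps as the source and sketch plausible details (exactness, positive entropy giving slope $a>1$, orientation)---so there is nothing to compare beyond noting that you have elaborated where the paper merely quotes.
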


We shall show below that the slope $a$ occuring in this theorem is $e^{h(\phi)}$ where $h(\phi)$ is topological entropy of $\phi$. 

Let $X$ be a compact metric space and $\sigma : X \to X$ a map which
takes Borel sets to Borel sets, and let $\nu$ a bounded Borel measure on $X$. Following Shultz, \cite{S1}, we say that $\sigma$ \emph{scales $\nu$ by a factor $a$} when $\nu\left(\sigma(E)\right) = a \nu(E)$ for every Borel subset $E$ of $X$ on which $\sigma$ is injective.

\begin{lemma}\label{uu15} There is an $a > 1$ and a non-atomic Borel probability measure $\nu$ on $\mathbb T$ which $\phi$ scales by a factor $a$. 
\end{lemma}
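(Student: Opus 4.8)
The plan is to transport the normalized arc-length measure on $\mathbb T$ back along the conjugacy furnished by Theorem \ref{11}, after first checking that a uniformly piecewise linear map of slope $a$ scales arc-length by $a$.

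First I would invoke Theorem \ref{11} to fix an orientation-preserving homeomorphism $k\colon\mathbb T\to\mathbb T$ and a real number $a>1$ such that $\psi:=k\circ\phi\circ k^{-1}$ is uniformly piecewise linear with slope $a$. Write $m$ for the normalized arc-length (Lebesgue) probability measure on $\mathbb T$; it is non-atomic. Note also that $k$ takes Borel sets to Borel sets because it is a homeomorphism, that $\phi$ does so because it is continuous and piecewise monotone (hence a finite union of restrictions that are homeomorphisms onto closed arcs), and likewise for $\psi$.

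Next I would show that $\psi$ scales $m$ by the factor $a$. Subdividing the maximal intervals of linearity of the lift of $\psi$ if necessary, I may cover $\mathbb T$ by finitely many closed arcs $I_1,\dots,I_M$, each of length less than $a^{-1}$, on each of which $\psi$ acts, in the lift coordinates, as an affine map of slope $\pm a$; since each $I_j$ has length less than $a^{-1}$, its image is an arc of length less than $1$, so $\psi$ maps $I_j$ homeomorphically onto an arc and $m(\psi(E\cap I_j))=a\,m(E\cap I_j)$ for every Borel set $E$. The pairwise intersections $I_i\cap I_j$ ($i\neq j$) are contained in a finite set $F$ with $m(F)=m(\psi(F))=0$. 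If $\psi$ is injective on a Borel set $E$, then for $i\neq j$ any common point of $\psi(E\cap I_i)$ and $\psi(E\cap I_j)$ is the image of a single point of $E$ lying in $I_i\cap I_j\subseteq F$; hence the sets $\psi(E\cap I_j)$, $j=1,\dots,M$, are pairwise disjoint modulo an $m$-null set, and
\[
m\bigl(\psi(E)\bigr)=\sum_{j=1}^{M}m\bigl(\psi(E\cap I_j)\bigr)=a\sum_{j=1}^{M}m(E\cap I_j)=a\,m(E).
\]

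Finally, since $k$ is a homeomorphism the assignment $\nu(B):=m\bigl(k(B)\bigr)$ defines a non-atomic Borel probability measure on $\mathbb T$. From $\psi\circ k=k\circ\phi$ it follows that $\psi$ is injective on $k(E)$ whenever $\phi$ is injective on a Borel set $E$, and then
\[
\nu\bigl(\phi(E)\bigr)=m\bigl(k(\phi(E))\bigr)=m\bigl(\psi(k(E))\bigr)=a\,m\bigl(k(E)\bigr)=a\,\nu(E),
\]
so $\phi$ scales $\nu$ by the factor $a$, as desired. I expect the only delicate point to be the middle step — controlling the arc-length of the image of a Borel set that straddles several intervals of linearity — and there the whole issue is simply that injectivity of $\psi$ on $E$ prevents the pieces $\psi(E\cap I_j)$ from overlapping except on a null set, so that no mass is lost or double-counted; the passage between $\phi$ and $\nu$ via $k$ is then purely formal.
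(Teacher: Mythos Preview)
Your proof is correct and follows exactly the same route as the paper: invoke Theorem~\ref{11}, observe that the uniformly piecewise linear map $k\circ\phi\circ k^{-1}$ scales Lebesgue measure by $a$, and pull back via $\nu(B)=m(k(B))$. The paper merely asserts the scaling property in one sentence, whereas you have carefully spelled out the partition-into-arcs argument and the null-set bookkeeping; so your version is a fleshed-out form of the paper's proof rather than an alternative approach.
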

\begin{proof} Let $a$ and $k$ be as in Theorem \ref{11}. Being
  uniformly piecewise linear with slope $a$, the map $k \circ \phi
  \circ k^{-1}$ scales the normalized Lebesgue measure $m$ on $\mathbb T$ by a factor $a$. Set $\nu(B) = m(k(B))$. 
\end{proof}

\begin{lemma}\label{uu14} Let $\nu$ be a Borel probability measure on $\mathbb T$ and assume that $\phi$ scales $\nu$ by a factor $a > 1$. Then $\nu$ is $\left(\Gamma^+_{\phi}, c_g\right)$-conformal with exponent $\log a$.
\end{lemma}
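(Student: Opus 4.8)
The plan is to unwind the definition of $\left(\Gamma^+_{\phi},c_g\right)$-conformality and reduce it to the scaling property of $\nu$ on pieces where iterates of $\phi$ are injective. Fix an open bi-section $W \subseteq \Gamma^+_{\phi}$. Since $\Gamma^+_{\phi} = \bigsqcup_{k} \Gamma^+_{\phi}(k)$ and each $\Gamma^+_{\phi}(k)$ is the increasing union of the $\Gamma^+_{\phi}(k,n)$, and since a bi-section can be covered by countably many such sets with $c_g$ constant $=k$ on each, by countable additivity of $\nu$ it suffices to verify \eqref{Gconf} for a $W$ contained in a single $\Gamma^+_{\phi}(k,n)$. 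On such a $W$ every point has the form $(x,k,y)$ with $\phi^{k+n}(x) = \phi^n(y)$ and $\val(\phi^{k+n},x) = \val(\phi^n,y)$; writing $m = n$ and $n' = k+n$ we have $n', m \in \mathbb N$, $n' - m = k$, $\phi^{n'}$ injective on $r(W)$, $\phi^{m}$ injective on $s(W)$ (bi-section!), and $\phi^{n'}(r(W)) = \phi^{m}(s(W))$ with matching valencies. So the claim becomes: if $A = r(W)$, $B = s(W)$ are Borel, $\phi^{n'}$ is injective on $A$, $\phi^{m}$ is injective on $B$, $\phi^{n'}(A) = \phi^{m}(B)$, then $\nu(B) = a^{m - n'}\nu(A) = e^{(\log a)\,k}\nu(A)$, which is exactly the right-hand side of \eqref{Gconf} since $c_g(r_W^{-1}(x)) = k$ for $x \in r(W)$.

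To prove this reduced statement I would use the scaling hypothesis iteratively. Since $\phi$ scales $\nu$ by $a$, for any Borel set $E$ on which $\phi^j$ is injective one gets $\nu(\phi^j(E)) = a^j \nu(E)$ by splitting $E$ into finitely many pieces on each of which $\phi, \phi^2, \dots, \phi^j$ are successively injective (the turning points of the iterates are finite in number, so this is a finite partition up to a $\nu$-null—indeed countable—set, and $\nu$ is non-atomic). Applying this with $E = A$ and $j = n'$ gives $\nu(\phi^{n'}(A)) = a^{n'}\nu(A)$; with $E = B$ and $j = m$ gives $\nu(\phi^{m}(B)) = a^{m}\nu(B)$. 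Since $\phi^{n'}(A) = \phi^{m}(B)$ these are equal, so $a^{n'}\nu(A) = a^{m}\nu(B)$, i.e. $\nu(B) = a^{\,n' - m}\nu(A) = e^{(\log a)\, c_g(r_W^{-1}(x))}\nu(A)$, which after integrating the constant over $r(W)$ is precisely \eqref{Gconf}. One should also record that $\phi$ taking Borel sets to Borel sets (needed for the scaling statement even to typecheck) follows since $\phi$ is piecewise monotone, hence piecewise a homeomorphism.

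The main obstacle is the bookkeeping in the first paragraph: making sure the bi-section $W$ really can be cut into countably many Borel pieces lying in individual $\Gamma^+_{\phi}(k,n)$'s with $c_g$ constant, and that $r$ restricted to such a piece is a Borel isomorphism onto its image so that "$A = r(W)$, $B = s(W)$" are genuinely Borel with the stated injectivity and equality of images — this is where the bi-section hypothesis and the étale structure are used. The second obstacle, minor, is the finite-partition argument for iterated scaling: one must check that the exceptional set where some $\phi^i|_E$ fails to be injective is countable (it is, being contained in the preimages of the finitely many turning points together with the finitely many periodic points of each period) and hence $\nu$-null by non-atomicity, so it does not affect the measure identities. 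Everything else is routine manipulation of the conformality identity.
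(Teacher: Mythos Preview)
Your overall plan coincides with the paper's: reduce to a bi-section $W$ inside a single $\Gamma^+_{\phi}(k,n)$ and then compare $\nu(r(W))$ and $\nu(s(W))$ through their common image $\phi^{k+n}(r(W)) = \phi^n(s(W))$ using the scaling hypothesis. There is, however, a genuine gap at the point you mark ``(bi-section!)'': the bi-section property does \emph{not} give that $\phi^{n'}$ is injective on $r(W)$, nor that $\phi^m$ is injective on $s(W)$. A counterexample is the diagonal $W = \{(x,0,x) : x \in U\} \subseteq \Gamma^+_{\phi}(0,1)$ for any open arc $U$; this is an open bi-section with $r(W)=s(W)=U$, on which $\phi$ certainly need not be injective. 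The bi-section hypothesis says only that $r$ and $s$ are injective on $W$; it says nothing about iterates of $\phi$ on the images, and the \'etale structure does not help either.

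The repair is precisely the extra step the paper inserts and you omit: having reduced to $W \subseteq \Gamma^+_{\phi}(k,n)$ with transfer map $\eta = s\circ r_W^{-1}$ satisfying $\phi^{n'} = \phi^m\circ\eta$ on $A = r(W)$, one further partitions $A$ into finitely many Borel pieces $A_i$, each lying in a single monotonicity interval of $\phi^{n'}$. On $A_i$ the map $\phi^{n'}$ is injective, and then $\phi^m$ is automatically injective on $B_i = \eta(A_i)$ since $\phi^m\circ\eta|_{A_i} = \phi^{n'}|_{A_i}$ is injective and $\eta$ is a bijection. Your scaling computation now runs on each piece, and summing gives $\nu(B)=a^{n'-m}\nu(A)$ (note the sign: your first paragraph has $a^{m-n'}$, your second the correct $a^{n'-m}$). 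Two smaller remarks: the iterated-scaling identity $\nu(\phi^j(E)) = a^j\nu(E)$ for $\phi^j$ injective on $E$ needs no splitting and no exceptional set, since $\phi^j$ injective on $E$ already forces $\phi$ injective on each $\phi^i(E)$ for $i<j$; and the lemma does not assume $\nu$ non-atomic---this would follow from scaling by $a>1$, but it is not used.
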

\begin{proof} Let $W$ be an open bi-section in $\Gamma^+_{\phi}$. Then 
$$
B \mapsto \nu\left(s\left(r_W^{-1}(B)\right)\right)
$$
and 
$$
B \mapsto \int_B e^{(\log a) c_g\left(r_W^{-1}(x)\right)} \ d\nu(x)
$$ 
are both Borel measures on $r(W)$, and we want to conclude that they agree. Since 
$$
r(W) = \bigcup_{n,m} r\left(\Gamma^+_{\phi}(n,m) \cap W\right),
$$
it suffices to check that the measures agree on $r\left( W \cap \Gamma^+_{\phi}(n,m)\right)$ for each $n,m$, i.e. we may assume that $W \subseteq \Gamma^+_{\phi}(n,m)$ for some $n,m \in \mathbb N$. In fact, by definition of the topology of $\Gamma^+_{\phi}$ we may assume $W = \left\{(z,k,\eta(z)) : \ z \in U\right\}$ where $\eta : U \to V$ is an orientation preserving homeomorphism between open sets $U$ and $V$ such that $\phi^n(z) = \phi^m\left(\eta(z)\right)$ for all $z \in U$. To show that the two measures agree in this case, let $B \subseteq r(W) = U$ be a Borel subset such that $\phi^n$ is injective on $B$. Since every Borel subset of $U$ is a finite disjoint union of Borel sets for which this holds, it suffices to verify that the two measures agree on such a $B$. To this end note that $s\left(r_W^{-1}(B)\right) = \eta(B)$ and that $\phi^m$ is injective on $\eta(B)$. Since $\phi$ scales $\nu$ by the factor $a$ it follows that  
$$
\nu\left(s\left(r_W^{-1}(B)\right)\right) = \nu(\eta(B)) = a^{-m}\nu\left(\phi^m(\eta(B)\right) = a^{-m}\nu\left(\phi^n(B)\right) = a^{n-m}\nu(B) .
$$
This completes the proof since $a^{n-m}\nu(B) = \int_B e^{(\log a) c_g\left(r_W^{-1}(x)\right)} \ d \nu(x)$.
\end{proof}

In relation to the proof of Theorem \ref{unique} note that the existence of a non-atomic KMS-state follows from Proposition \ref{decomp!}, Lemma \ref{uu14} and Lemma \ref{uu15}. To determine the corresponding inverse temperature we consider again the disconnection $(X,\sigma)$ of Shultz, \cite{S1}, but now the disconnection of $([0,1],f_0)$ where $f_0 = \kappa \circ \phi \circ p$. Thus $X$ is a compact metric space, $\sigma : X \to X$ is continuous and there is a continuous surjection $\pi : X \to [0,1]$ such that $\pi \circ \sigma(z) = f_0 \circ \pi(z)$ for all $z$ in a dense subset of $X$. See \cite{S1}. We also need the observation that by construction $\pi$ is at most two-to-one everywhere, i.e. $\# \pi^{-1}(t) \leq 2$ for all $t \in [0,1]$. It follows from this that $p \circ \pi : X \to \mathbb T$ is a continuous factor map (i.e. $\phi \circ p \circ \pi = p \circ \pi \circ \sigma$) such that $\# \left(p \circ \pi\right)^{-1}(x) \leq 4$ for all $x \in \mathbb T$. This implies that $\phi$ and $\sigma$ have the same topological entropy, i.e. $h(\phi) = h(\sigma)$. It follows therefore from Proposition 2.8 and Proposition 4.3 in \cite{S2} that the value $\beta  \neq 0$ for which the gauge action on $C^*_r\left(\Gamma^+_{\phi}\right)$ has a non-atomic KMS-state is $\beta = h(\phi)$. This completes the proof of Theorem \ref{unique} because it follows from \cite{ST} that all $0$-KMS states are purely atomic when they exist.

\section{The purely atomic KMS states}

As explained in Section \ref{sec2} the extremal purely atomic $\beta$-KMS
states are supported on $\Ro$-orbits that are $\beta$-summable in the
sense of \cite{Th2}. In order for an $\Ro$-orbit $\Ro(x)$ of an element $x \in \mathbb T$ to be $\beta$-summable it must first of all be consistent, as defined in \cite{Th2}. By the nature of the groupoid $\Gamma^+_{\phi}$ and the homomorphism $c_g$ this happens if and only if the isotropy group of $x$ in $\Gamma^+_{\phi}$ is trivial, or alternatively that $x$ is not pre-periodic. When this is the case there is a well-defined map $l_x : \Ro(x) \to \mathbb R$ given by
$$
l_x(z) = e^{-k}
$$
where $k \in \mathbb Z$ is determined by the condition that $(z,k,x) \in \Gamma^+_{\phi}$, and $\Ro(x)$ is then $\beta$-summable, by definition, when
$$
\sum_{z \in \Ro(x)} l_x(z)^{\beta}  < \infty.
$$

\begin{lemma}\label{uu12} Let $\beta \neq 0$ and assume that $\Ro(x)$ is a $\beta$-summable $\Ro$-orbit. It follows that $ \Ro(x)= \Ro(c)$ where $c$ is a critical point which is not pre-periodic.
\end{lemma}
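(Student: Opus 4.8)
The plan is to analyze the $\beta$-summability condition $\sum_{z \in \Ro(x)} l_x(z)^{\beta} < \infty$ directly, using the structure of the map $l_x$ and the piecewise monotone dynamics. First I would observe that $\beta$-summability forces, in particular, that the fiber $\{z \in \Ro(x) : (z,k,x) \in \Gamma^+_{\phi}\}$ be finite for each fixed $k \in \mathbb Z$, since each such $z$ contributes $e^{-k\beta}$ to the sum. When $k \leq 0$ this set is $\{z : \phi^{-k}(z) = \phi^0(x) = x, \ \val(\phi^{-k},z) = \val(\phi^0,x)\} \subseteq \phi^{k}(x)$-preimages, which is automatically finite; the real content is at the levels where we move "forward" past $x$. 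More usefully, I would stratify $\Ro(x)$ by the pair $(n,m) \in \mathbb N^2$ realizing $\phi^n(z) = \phi^m(x)$ with matching valencies, where $k = n - m$; the key point is that $\phi^n$ restricted to any of its finitely many intervals of monotonicity is injective, so the number of $z$ with a given $(n,m)$ is at most the number of preimages of the point $\phi^m(x)$ under $\phi^n$, hence at most (number of monotonicity intervals of $\phi^n$) $= $ roughly $N^n$ up to the degree.

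The crux of the argument, then, is to show that if $x$ is \emph{not} on the restricted orbit of any critical point, then $\Ro(x)$ is "too large" to be $\beta$-summable — i.e., it grows at a rate that makes the sum diverge for every $\beta \neq 0$. Here I would exploit exactness (which the paper assumes throughout). Because $\phi$ is exact, for any nonempty open interval $I$ there is $N$ with $\phi^N(I) = \mathbb T$, and in the process of surjecting, $\phi^N|_I$ necessarily picks up critical points of $\phi^N$; this was exactly the mechanism used in the proof of Lemma \ref{u2}. The idea is that if $x$ avoids all critical orbits, then backward iteration along $\Ro(x)$ branches fully — at each level a definite positive proportion of the branching is "genuine" (two distinct preimages with matching valency data), because no branch is ever obstructed by hitting a turning point that would be ruled out by the valency condition. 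So the number of $z \in \Ro(x)$ at level $n$ (i.e. with $(z, n-m, x) \in \Gamma^+_{\phi}$ for the relevant $m$) grows like $d^n$ or faster where $d \geq 2$ accounts for the branching forced by exactness, while $l_x(z)^\beta = e^{-(n-m)\beta}$; choosing $m$ appropriately (or noting $m$ is bounded in terms of $x$ once we fix the "target" iterate $\phi^m(x)$), the tail sum behaves like $\sum_n d^n e^{-n\beta}$, which diverges for $\beta \leq \log d$ — but for $\beta > \log d \geq h(\phi)$ one must work slightly harder. Actually the cleaner route: show that non-critical, non-pre-periodic $x$ has $\Ro(x) \supseteq \Ro(y)$ for infinitely many distinct $y$ (the backward branches never "merge down" to a single orbit), contradicting the characterization that a $\beta$-summable orbit is a minimal object in the ordering of restricted orbits; and separately handle the bookkeeping with $l_x$.

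I would then assemble the contrapositive: if $\Ro(x)$ is $\beta$-summable, $x$ must be pre-periodic-excluded (trivial isotropy, already noted) \emph{and} $x$ must lie on $\Ro(c)$ for some critical $c$ — for otherwise the branching-under-exactness argument above produces a divergent sum. Finally, once we know $\Ro(x) = \Ro(c)$ for a critical $c$, the requirement that the isotropy be trivial (equivalently $x$ not pre-periodic, equivalently $c$ not pre-periodic, since $x$ and $c$ share a restricted orbit) gives the last clause of the statement.

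\textbf{Expected main obstacle.} The delicate point is making the "branching forced by exactness" quantitative enough to beat $e^{-n\beta}$ for \emph{all} $\beta \neq 0$, not just for $\beta$ below some threshold: a crude count gives divergence only for small $\beta$. I expect the resolution is that when $x$ is not on a critical restricted orbit, every preimage branch survives with its valency intact, so the count of level-$n$ points is the \emph{full} preimage count of $\phi^n$ at a point — which, combined with the fact that $l_x(z) = e^{-(n-m)}$ with $m$ bounded, and the fact that a $\beta$-summable orbit cannot contain $\Ro(y)$ as a proper sub-orbit for any $y$ — forces a contradiction. Pinning down exactly why "no branch is obstructed" is equivalent to "not on a critical orbit," and handling the valency matching at each backward step carefully, is where the work lies.
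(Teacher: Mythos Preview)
Your proposal has a genuine gap, and in fact you identify it yourself in the ``expected main obstacle'' paragraph without resolving it. Counting backward preimages at level $n$ gives you roughly $d^n$ elements $z$ with $l_x(z)^\beta \approx e^{-n\beta}$, so the sum $\sum_n d^n e^{-n\beta}$ diverges only for $\beta < \log d$. For large positive $\beta$ this sum converges, and nothing in your sketch produces divergence in that regime. The vague alternative (``$\Ro(x) \supseteq \Ro(y)$ for infinitely many $y$'') does not help: that is still a statement about backward branching, and it is entirely consistent with $\beta$-summability when $\beta$ is large.

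The missing idea is that you must also look \emph{forward} along the orbit of $x$, producing elements $z$ with $l_x(z)$ arbitrarily large rather than arbitrarily small. The paper does this as follows. Using exactness one finds $N$ and sets $J_{\pm}$ with $\val(\phi^N,\cdot) = (\pm,\pm)$ on $J_{\pm}$ and $\phi^N(J_+)\cap\phi^N(J_-)$ of cofinite measure in $\mathbb T$. Since $x$ is not pre-periodic, $\phi^k(x)$ lands in this set for infinitely many $k$, and for each such $k$ one can choose $z_k \in J_+ \cup J_-$ with $\phi^N(z_k) = \phi^k(x)$ and matching valency. Then $l_x(z_k)^\beta = e^{(k-N)\beta}$, and summability over $k \to \infty$ forces $\beta < 0$. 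Only \emph{then} does one turn to a backward-orbit argument (finding a full backward $\phi^N$-orbit of some $z_k'$ that avoids the critical set, yielding elements $y_j$ with $l_x(y_j)^\beta = e^{(k - jN - 2N)\beta}$), which is now incompatible with $\beta < 0$. The two directions together squeeze out every nonzero $\beta$; your sketch only contains the second half.
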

\begin{proof} Assume for a contradiction that $\Ro(x)$ does not
  contain a critical point. Then no element of $\Ro(x)$ is pre-critical and, since $\Ro(x)$ is consistent, no element of $\Ro(x)$ is pre-periodic. Since $\phi$ is exact we can find an $N \in \mathbb N$ and subsets $J_{\pm} \subseteq \mathbb T$ such that $\val\left(\phi^N,x\right) = (\pm,\pm), \ x \in J_{\pm}$, and $\mathbb T \backslash \left(\phi^N(J_+) \cap \phi^N(J_-)\right)$ is a finite set, cf. the proof of Lemma \ref{u2}. Since $x$ is not pre-periodic there is an infinite set $K \subseteq \mathbb N$ such that $\phi^k(x) \in
  \phi^N\left(J_-\right) \cap \phi^N\left(J_+\right)$ for all $k \in K$. For each $k \in K$
  there is an element $z_k \in \left(J_+ \cup J_-\right) \cap \phi^{-N}\left(\phi^k(x)\right)$ such   that $\val\left( \phi^N, z_k\right) = \val\left(\phi^k,x\right)$; i.e. $z_k \in \Ro(x)$. Since $\Ro(x)$ is $\beta$-summable it follows that
$$
\sum_{k \in K} l_x\left(z_k\right)^{\beta} = \sum_{k \in K} e^{(k-N)\beta} < \infty ,
$$
which implies that $\beta < 0$. To complete the proof we modify an
argument from the proof of Lemma 7.3 in \cite{Th2}: Let $\mathcal C_N$ be the set of
critical points for $\phi^N$. For each $k \in K,
k >N$, there are two elements in $\left(J_+ \cup J_-\right) \cap \phi^{-N}\left(\phi^k(x)\right)$; one of them is an element $z'_k \in \phi^{-N}\left(\phi^k(x)\right)
\backslash \left(\{\phi^{k-N}(x)\} \cup \mathcal C_N\right)$. Then  
$$
\left(\bigcup_{j \in \mathbb N} \phi^{-jN}(z'_k)\right) \cap
\left(\bigcup_{j \in \mathbb N} \phi^{-jN}(z'_l)\right) = \emptyset 
$$
when $k \neq l$. Since $\mathcal C_N$ is a finite set there must
therefore be a $k \in K, k
> N$, such that
$$
\left(\bigcup_{j \in \mathbb N} \phi^{-jN}(z'_k)\right) \cap \mathcal
C_N = \emptyset .
$$
Then $\val\left( \phi^{jN +N}, y\right) \in \left\{(+,+),
  (-,-)\right\}$ for all $y \in \phi^{-jN}\left(z'_k\right)$. Since
  $\phi^{-jN}\left(z'_k\right) \cap \phi^{-j'N}\left(z'_k\right) =
  \emptyset$ when $j \neq j'$ and since $\mathbb T \backslash
  \left(\phi^N(J_+)\cap \phi^N(J_-)\right)$ is a finite set, it
  follows that there is an infinite set $K'$ such that 
$$
\phi^{-jN}(z'_k) \cap \left(\phi^N(J_+) \cap \phi^N(J_-)\right) \neq \emptyset 
$$
for all $j \in K'$. For each $j \in K'$ we can choose an element $y_j \in J_+\cup J_-$ such
that $\phi^{jN+2N}(y_j) = \phi^k(x)$ and
$\val\left(\phi^{jN+2N},y_j\right) = \val\left(\phi^k,x\right)$. Then
$y_j \in \Ro(x)$, and since $\Ro(x)$ is $\beta$-summable by assumption we must have that
$$
\sum_{j \in K'} l_x\left(y_j\right) = \sum_{j\in K'} e^{(k-jN-2N)\beta} < \infty,
$$
which is impossible since $\beta < 0$. This gives us the desired
contradiction. It follows that $\Ro(x) = \Ro(c)$ for some critical point $c$ which is not pre-periodic.
\end{proof}

A critical point $c \in \mathcal C_1$ will be called \emph{terminal} when $c$ is not pre-periodic and its forward orbit $\left\{\phi^k(c) : \ k = 1,2,3, \dots \right\}$ does not contain any critical points. Let $\mathcal C_T$ be the set of terminal critical points.

\begin{lemma}\label{uu13} Let $c \in \mathbb T$ be a critical point which is not pre-periodic. Then $\Ro(c)$ is $\beta$-summable for some $\beta \neq 0$ if and only if $\beta > h(\phi)$.
\end{lemma}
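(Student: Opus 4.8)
The idea is to pass to the uniformly piecewise linear model and count $\Ro(c)$ one ``level'' at a time. By \refthm{11} we may replace $\phi$ by a conjugate which is uniformly piecewise linear with slope $a=e^{h(\phi)}>1$: the conjugating homeomorphism induces an isomorphism of $\Gamma^+_{\phi}$ intertwining the cocycles $c_g$, so it carries restricted orbits to restricted orbits, preserves the functions $l_x$ and the property of being a non-pre-periodic critical point, and leaves $\beta$-summability unchanged; that the slope is $e^{h(\phi)}$ was established in the proof of \refthm{unique}. For $k\in\mathbb Z$ put $M_k=\#\{z\in\Ro(c):\ l_c(z)=e^{-k}\}$, so that the quantity in question is $\sum_{k}M_ke^{-k\beta}$ and everything reduces to controlling the $M_k$.

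The combinatorial input is as follows. Since $c$ is not pre-periodic and $\mathcal C_1$ is finite, the forward orbit of $c$ meets $\mathcal C_1$ only finitely often, and because a fold of $\phi^n$, once created, persists under all further iterates, one checks that $\phi^j(c)\in\Ro(c)$ for only finitely many $j\ge1$; these contribute finitely many (finite) terms $e^{j\beta}$, and $M_k=0$ for $k$ sufficiently negative. The key point is the range $k\ge1$, where I claim
\[
\#\phi^{-k}(c)\ \le\ M_k\ \le\ C\,\ell_k
\]
with $\ell_k$ the lap number of $\phi^k$ and $C=C(\phi,c)$. The lower bound is the statement that every $z$ with $\phi^k(z)=c$ lies in $\Ro(c)$ with $l_c(z)=e^{-k}$; this is a case check on $\val(\phi^k,z)$: when $\val(\phi^k,z)=(+,+)$ use $(n,m)=(k,0)$, and otherwise use $(n,m)=(k+1,1)$, observing that composing the germ of $\phi^k$ at $z$ with $\val(\phi,c)$ (of critical type, since $c$ is critical) converts the apparent mismatch into equality. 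For the upper bound, if $z\in\Ro(c)$ with $l_c(z)=e^{-k}$ is not a clean preimage of $c$, let $j$ be the first step at which the orbit of $z$ hits $\mathcal C_1$, say at $c'$; then $c'$ has its forward orbit merge with that of $c$, and non-pre-periodicity of $c$ forces this merging to occur at a single offset $d_{c'}$, the same for all such merges, so $j=k-d_{c'}$. Hence $z$ is one of at most $\ell_{k-d_{c'}}\le\ell_{k+D}\le\ell_D\ell_k$ clean preimages of one of the finitely many such $c'$, giving $M_k\le C\ell_k$.

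For the ``if'' direction, the Lasota--Yorke inequality for the transfer operator $\mathcal L$ of the piecewise linear model (taken with respect to the measure $\nu$ of \reflemma{uu15}, here Lebesgue measure, so $\mathcal L^k1(x)=e^{-h(\phi)k}\#\phi^{-k}(x)$) bounds $\sup_j\|\mathcal L^j1\|_\infty<\infty$; combined with $\ell_k\le\sum_{j<k}\#\phi^{-j}(\mathcal C_1)$ this yields $\ell_k\le C'e^{h(\phi)k}$, whence $\sum_{k\ge1}M_ke^{-k\beta}\le CC'\sum_{k\ge1}e^{(h(\phi)-\beta)k}<\infty$ for $\beta>h(\phi)$; the finitely many remaining terms are finite, so $\Ro(c)$ is $\beta$-summable. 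For the converse, $\beta<0$ is immediate since $\phi$ onto gives $M_k\ge\#\phi^{-k}(c)\ge1$ while $\sum_ke^{-k\beta}$ diverges; and for $0<\beta\le h(\phi)$ one has $\sum_{k\ge1}M_ke^{-k\beta}\ge\sum_{k\ge1}\#\phi^{-k}(c)e^{-k\beta}$, which diverges provided
\[
\#\phi^{-k}(c)\ \ge\ \delta\,e^{h(\phi)k}\qquad(k\ \text{large})
\]
for some $\delta>0$. This last estimate is the genuine difficulty. As $\#\phi^{-k}(c)=e^{h(\phi)k}\mathcal L^k1(c)$ it says $\liminf_k\mathcal L^k1(c)>0$, which I would deduce from the spectral gap of $\mathcal L$ on functions of bounded variation (Lasota--Yorke together with exactness of $\phi$), giving $\mathcal L^k1\to\rho$ for the invariant density $\rho$, together with the fact that the invariant density of an exact map is bounded away from $0$; a little care is needed about the value at $c$ itself should $\rho$ jump there, where one argues with the average of $\mathcal L^k1$ over a small arc about $c$ and with the laps of $\phi^k$ whose image covers that arc. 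The crude bound $\#\phi^{-k}(c)=e^{(h(\phi)+o(1))k}$ (from $\#\phi^{-k}(c)\le\ell_k$ and submultiplicativity) already handles $0<\beta<h(\phi)$ by the root test; it is exactly the endpoint $\beta=h(\phi)$ that needs the honest constant $\delta$. Granting this, the three ranges of $\beta$ combine to give the assertion.
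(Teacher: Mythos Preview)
Your strategy---conjugate to the piecewise linear model, sandwich the level counts $M_k$ between $\#\phi^{-k}(c)$ and $C\ell_k$, and control both sides via the transfer operator---is a reasonable alternative to the paper's route, and the combinatorics (first critical hit, trivial isotropy fixing a unique offset $d_{c'}$) are sound. The gap is exactly where you flag it: the pointwise lower bound $\#\phi^{-k}(c)\ge\delta\,e^{h(\phi)k}$. The spectral gap gives $\mathcal L^k1\to\rho$ in $BV$ (or $L^1$), not pointwise, and both $\rho$ and the iterates $\mathcal L^k1$ have their jumps precisely on the forward and backward orbits of $\mathcal C_1$; evaluating at the critical point $c$ is therefore the worst possible case. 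Your averaging fix is not carried out, and making it work would require bounding the number of laps of $\phi^k$ whose image is short, which is an estimate of the same strength as the one you are after. Note too that your ``crude bound'' $\#\phi^{-k}(c)=e^{(h(\phi)+o(1))k}$ is only justified from above: $\#\phi^{-k}(c)\le\ell_k$ together with submultiplicativity of $\ell_k$ gives $\limsup k^{-1}\log\#\phi^{-k}(c)\le h(\phi)$ and nothing more, so the root-test argument for $0<\beta<h(\phi)$ has the same hole.

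The paper sidesteps the pointwise issue entirely. After reducing via terminal critical points to the series $\sum_k\#\phi^{-k}(c')e^{-k\beta}$ for $c'\in\mathcal C_T$, it chooses $z_0\in\phi^{-K}(c')$ with $K$ large enough that the whole backward orbit of $z_0$ avoids $\mathcal C_1$ (possible because $c'$ is terminal and not periodic), and after a rotation also avoids the base point $1$. It then passes to Shultz's disconnection $(X,\sigma)$ of $([0,1],f_0)$; since the backward orbit of $z_0$ never meets a critical point, the collapsing map $\pi$ restricts to a bijection $\sigma^{-k}(x_0^+)\to\phi^{-k}(z_0)$ for every $k$. Now $\sigma$ is an exact local homeomorphism, and Hofbauer's Theorems 11 and 12 give two-sided bounds $c\le\liminf_k e^{-h(\sigma)k}\#\sigma^{-k}(x)\le\limsup_k e^{-h(\sigma)k}\#\sigma^{-k}(x)\le d$ uniformly in $x$; since $h(\sigma)=h(\phi)$ this yields the needed estimate at $z_0$ and hence at $c'$. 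The device that replaces your missing lower bound is thus: rather than fight for a pointwise estimate at a critical point, move to a nearby point whose backward orbit dodges the critical set, and invoke a counting theorem for systems without critical points.
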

\begin{proof} Let $x \in \Ro(c)$. Since $c$ is critical and $\val\left(\phi^n,x\right) = \val\left(\phi^m,c\right)$ for some $n,m$, it follows that $x$ is pre-critical. And $x$ is not pre-periodic since $c$ is not, and there is therefore a terminal critical point $c'$ containing $x$ in its backward orbit. Since $c' \in \Ro(c)$ this shows that
$$
\Ro(c) = \bigcup_{c' \in \Ro(c) \cap \mathcal C_T} \bigcup_{k =0}^{\infty} \phi^{-k}(c') .
$$  
For each $c' \in \Ro(c) \cap \mathcal C_T$ we choose $n,m \in  \mathbb N$ such that $\phi^n(c) = \phi^m(c')$, and set 
\begin{equation}\label{jan1013}
t(\beta,c') =  e^{(n-m)\beta}.
\end{equation} 
Then
$$
\sum_{z \in \Ro(c)} l_c(z)^{\beta} = \sum_{c' \in \Ro(c) \cap \mathcal C_T} t(\beta,c') \sum_{k=0}^{\infty} n_k(c')e^{-k\beta} 
$$
where $n_k(c') =  \# \phi^{-k}(c')$. Hence
$\Ro(c)$ is $\beta$-summable if and only if
\begin{equation}\label{uu500}
\sum_{k=0}^{\infty} n_k(c')e^{-k\beta} < \infty
\end{equation}
for all $c' \in \Ro(c) \cap \mathcal C_T$. 

We aim to show that for each $c' \in \Ro(c) \cap \mathcal C_T$ we have that (\ref{uu500}) holds if and only if $\beta > h(\phi)$. To estimate $n_k(c')$ observe first that after conjugation by a rotation we can arrange that $1$ is not in the backward orbit of $c'$. Since $c'$ is not periodic there is a $K \in \mathbb N$ such that $\phi^{-k}(c') \cap \mathcal C_1 = \emptyset$ for all $k \geq K$. Let $z_0 \in \phi^{-K}(c')$. We consider then again the disconnection $(X,\sigma)$ of $([0,1],f_0)$, \cite{S1}. Set $x_0 = \kappa(z_0)$ and note that $\# \phi^{-k}(z_0) =\# f_0^{-k}(x_0)$ for all $k$ since $1$ is not in the backward orbit of $c'$.  For each $x \in \bigcup_{k \in \mathbb N} f_0^{-k}(x_0)$ the pre-image $\pi^{-1}(x) \subseteq X$ of $x$ consists of the points $x^-$ and $x^+$ in $X$. We seek to compare $ \# f_0^{-k}(x_0)$ to $ \# \sigma^{-k}(x_0^+)$. For this purpose, observe that it follows by repeated application of Theorem 2.3(4)in \cite{S1} that $\pi\left(\sigma^{-k}\left(x_0^+\right)\right) \subseteq f_0^{-k}(x_0)$ for all $k$ because neither $0$ nor $1$ is in the backward orbit of $x_0$. Furthermore, we observe that when $y \in f_0^{-1}(x)$ is not a critical point for $f_0$ there is exactly one of the elements $\{y^+,y^-\}$ which is in $\sigma^{-1}(x^+)$ and the other is then in $\sigma^{-1}(x^-)$. Specifically, when $y$ is not critical and $f_0$ is increasing in a neighborhood of $y$,
$$
\sigma^{-1}(x^+) \cap \pi^{-1}(y) = \left\{y^+\right\} \ \text{and} \ \sigma^{-1}(x^-) \cap \pi^{-1}(y) = \left\{y^-\right\},
$$  
while
$$
\sigma^{-1}(x^+) \cap \pi^{-1}(y) = \left\{y^-\right\} \ \text{and} \ \sigma^{-1}(x^-) \cap \pi^{-1}(y) = \left\{y^+\right\},
$$ 
when $f_0$ is decreasing in a neighborhood of $y$, cf. \cite{S1}. This observation has bearing here because the backward orbit of $z_0$ does not contain $1$ or any critical point. It follows that all elements of $f_0^{-k}(x_0)$ are in an interval of monotonicity for $f_0$, and hence that $\# \pi^{-1}\left( f_0^{-1}(x\right) \cap \sigma^{-1}(x^{\pm}) = 1$ when $x \in f_0^{-k}(x_0)$. It follows in this way that $\pi : \sigma^{-k}(x_0^+) \to f_0^{-k}(x_0)$
is a bijection for all $k$, and we conclude that  
\begin{equation}\label{uu30}
\# \sigma^{-k}(x_0^+)  =  \# f_0^{-k}(x_0) = \# \phi^{-k}(z_0) 
\end{equation}
for all $k$. By an argument somewhat simpler than the one that proved Lemma \ref{u2} it follows that $f_0$ is exact in the sense of Definition 5.1 in \cite{S1} since $\phi$ is exact. It follows therefore from Lemma 5.2 in \cite{S1} that $\sigma$ is exact. We can then use Theorem 11 and Theorem 12 in \cite{H} to obtain constants $c > 0, d < \infty$ such that 
\begin{equation}\label{uu31}
c \leq \liminf_k \left(\lambda^{-k} \min_{x \in X} \# \sigma^{-k}(x)\right) \leq \limsup_k \left(\lambda^{-k} \sup_{x \in X} \# \sigma^{-k}(x)\right) \leq d
\end{equation}
where $\lambda = e^{h(\sigma)}$. Since $h(\sigma) = h(\phi)$ we can combine (\ref{uu30}) and (\ref{uu31}) to conclude that 
\begin{equation*}\label{uu32}
C \leq \liminf_k e^{-kh(\phi)} n_k(c') \leq \limsup_k e^{-kh(\phi)}n_k(c') \leq D.
\end{equation*}
where $C =\lambda^{-K}c$ and $D = d\lambda^{-K} \# \phi^{-K}(c')$. It follows that (\ref{uu500}) holds if and only $\beta > h(\phi)$.

\end{proof}

It follows from Lemma \ref{uu13} that every critical point $c$ which is not pre-periodic gives rise to a $\left(\Gamma^+_{\phi}, c_g\right)$-conformal measure with exponent $\beta$, and hence a $\beta$-KMS state for the gauge action when $\beta > h(\phi)$. To describe this measure note that it follows from Lemma \ref{uu13} that the sum
$$
N_{c}(\beta) = \sum_{k =0}^{\infty} \left( \# \phi^{-k}(c)\right) e^{-k\beta} 
$$
is finite when $c \in \mathcal C_T$ and $\beta > h(\phi)$. We can therefore introduce the Borel probability measure 
$$
\mu_{c,\beta} =  N_{c}(\beta)^{-1}  \sum_{k =0}^{\infty} \sum_{ v \in \phi^{-k}(c)} e^{-k\beta} \delta_v
$$
when $c \in \mathcal C_T$ and $\beta > h(\phi)$. (Here $\delta_v$ denotes the Dirac measure at $v$.) This is a $\left(\Gamma^+_{\phi}, c_g\right)$-conformal measure when $c$ is the only terminal critical point in $\Ro(c)$. To handle the general case, where $\Ro(c)$ may contain several terminal critical points, consider for each $c' \in \mathcal C_T \cap \Ro(c)$ the number $t(\beta,c')$ from (\ref{jan1013}). The sum   
$$
N_{\Ro(c)}(\beta) = \sum_{c' \in \mathcal C_T \cap \Ro(c)} t(\beta,c') \sum_{k =0}^{\infty} \left( \# \phi^{-k}(c')\right) e^{-k\beta} 
$$
is finite by Lemma \ref{uu13}. Set
\begin{equation}\label{uu51}
\mu_{\Ro(c),\beta}  = N_{\Ro(c)}(\beta)^{-1}  \sum_{c' \in \mathcal C_T \cap \Ro(c)}  t(\beta,c')\sum_{k =0}^{\infty} \sum_{ v \in \phi^{-k}(c')} e^{-k\beta} \delta_v.
\end{equation}
Then $\mu_{\Ro(c),\beta}$ is the $\left(\Gamma^+_{\phi}, c_g\right)$-conformal measure with exponent $\beta$ supported by $\Ro(c)$. Note that $\mu_{\Ro(c),\beta}$ is a convex combination of the measures $\mu_{c',\beta}, c' \in \mathcal C_T \cap \Ro(c)$. Specifically,
$$
\mu_{\Ro(c),\beta} \ = \ \sum_{c' \in \mathcal C_T \cap \Ro(c)} \alpha_{c',\beta} \mu_{c',\beta}
$$
where 
$$
\alpha_{c',\beta} = \frac{t(\beta,c')N_{c'}(\beta)}{N_{\Ro(c)}(\beta)} .
$$
We can now list all KMS states for the gauge action.

\begin{thm}\label{uu34}  Assume that $\phi : \mathbb T \to \mathbb T$ is piecewise monotone, continuous, exact and not locally injective. Let $h(\phi)$ be the topological entropy of $\phi$ and $N$ the number of $\Ro$-equivalence classes of critical points that are not pre-periodic.

There is no $\beta$-KMS state for the gauge action on $C^*_r\left(\Gamma^+_{\phi}\right)$ when $0 \neq \beta < h(\phi)$, a unique $\beta$-KMS state when $\beta = h(\phi)$ and $N$ extremal $\beta$-KMS states when $\beta > h(\phi)$. The unique $h(\phi)$-KMS state is non-atomic and all other KMS-states are purely atomic.

$0$-KMS states exist if and only $C^*_r\left(\Gamma^+_{\phi}\right)$ is not simple. This occurs only when the degree of $\phi$ is 1 or -1 and there is a non-critical fixed point $x$ for $\phi$ such that all elements of $\phi^{-1}(x) \backslash \{x\}$ are critical. Then the $0$-KMS states are in one-to-one correspondence with the Borel probability measures on $\mathbb T$.
\end{thm}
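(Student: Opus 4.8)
The plan is to split the argument at $\beta=0$. For $\beta\neq 0$ I would use Proposition~\ref{decomp!} as the organising principle: every $\beta$-KMS state factors through $P$, its restriction to $C(\mathbb T)$ decomposes uniquely as $s\nu+(1-s)\mu$ with $\nu$ non-atomic and $\mu$ purely atomic, both $\nu$ and $\mu$ are then $(\Gamma^+_\phi,c_g)$-conformal with exponent $\beta$, and conversely every such triple $(\nu,\mu,s)$ produces a $\beta$-KMS state; moreover, by Proposition~\ref{decomp!} together with Theorem~2.4 of \cite{Th2}, the extremal $\beta$-KMS states are exactly those arising from a conformal probability measure carried by a single restricted orbit (the unique non-atomic one being automatically extremal whenever it is the only KMS state at its temperature). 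So for $\beta\neq 0$ the problem reduces to classifying the non-atomic $(\Gamma^+_\phi,c_g)$-conformal probability measures with exponent $\beta$ and the $\beta$-summable restricted orbits together with the conformal measures they carry.

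The non-atomic part is already settled by Theorem~\ref{unique} (via Lemmas~\ref{uu14} and~\ref{uu15}): a non-atomic conformal probability measure with exponent $\beta$ exists precisely when $\beta=h(\phi)$, and then it is unique. For the atomic part, Lemma~\ref{uu12} shows that a $\beta$-summable restricted orbit ($\beta\neq 0$) must be $\Ro(c)$ for a critical point $c$ that is not pre-periodic, and Lemma~\ref{uu13} shows that $\Ro(c)$ is then $\beta$-summable exactly for $\beta>h(\phi)$. Since $\phi$ has only finitely many critical points there are exactly $N$ distinct $\Ro$-classes of non-pre-periodic critical points; for $\beta>h(\phi)$ each such class carries the conformal probability measure $\mu_{\Ro(c),\beta}$ of (\ref{uu51}), and this is the only conformal probability measure supported on $\Ro(c)$ because on a single consistent (i.e. isotropy-trivial) orbit the conformal condition is the Radon--Nikodym relation $\nu(\{z\})/\nu(\{z'\})=l_c(z)^\beta/l_c(z')^\beta$ of \cite{Th2}, \cite{N}. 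A general purely atomic conformal measure with exponent $\beta$ is supported on at most countably many restricted orbits, each of which must be one of these $N$, hence it is a finite convex combination of the $\mu_{\Ro(c),\beta}$ and is extremal iff it equals one of them.

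Assembling these facts: for $0\neq\beta<h(\phi)$ there is neither a non-atomic nor a purely atomic $(\Gamma^+_\phi,c_g)$-conformal probability measure with exponent $\beta$, so there is no $\beta$-KMS state; for $\beta=h(\phi)$ there is the unique non-atomic one and no $\beta$-summable restricted orbit (summability requires $\beta$ strictly larger than $h(\phi)$), so there is a single, non-atomic, $\beta$-KMS state; and for $\beta>h(\phi)$ there is no non-atomic conformal measure while the purely atomic ones are exactly the convex combinations of the $N$ measures $\mu_{\Ro(c),\beta}$, giving exactly $N$ extremal $\beta$-KMS states, all purely atomic. This proves the first two sentences of the theorem.

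For $\beta=0$ a $0$-KMS state is just a tracial state. If $C^*_r(\Gamma^+_\phi)$ is simple it is purely infinite, since exactness of $\phi$ implies topological transitivity and transitivity gives pure infiniteness by \cite{ST}, and a simple purely infinite $C^*$-algebra has no tracial state; by Theorem~5.1 of \cite{ST} simplicity fails only when $\deg\phi=\pm1$ and there is a non-critical fixed point $x$ with $\phi^{-1}(x)\setminus\{x\}\subseteq\mathcal C_1$. In that exceptional configuration \cite{ST} exhibits a quotient of $C^*_r(\Gamma^+_\phi)$ isomorphic to $C(\mathbb T)$ whose complementary ideal carries no tracial state, so every $0$-KMS state factors through this quotient, while conversely every Borel probability measure on $\mathbb T$ pulls back to a state of $C^*_r(\Gamma^+_\phi)$ which is tracial (the quotient being commutative) and hence a $0$-KMS state; this yields the asserted bijection. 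I expect the main obstacle to be the bookkeeping of the atomic case for $\beta>h(\phi)$ — checking that the $N$ classes $\Ro(c)$ give genuinely distinct, mutually singular, extremal conformal measures and that no further conformal measures occur — together with correctly importing from \cite{ST} the ideal structure underlying the $\beta=0$ dichotomy; the rest is assembly of the lemmas already established.
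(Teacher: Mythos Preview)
Your proposal is correct and follows essentially the same route as the paper: for $\beta\neq 0$ you combine Proposition~\ref{decomp!}, Theorem~\ref{unique}, Lemma~\ref{uu12} and Lemma~\ref{uu13} exactly as the paper does, and for $\beta=0$ you appeal to the ideal structure established in \cite{ST} (the paper cites Theorem~5.21 and Proposition~5.16 there). The only difference is that you spell out the assembly in more detail than the paper's two-line proof.
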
 
\begin{proof} For $\beta \neq 0$ all statements follow by combining Proposition \ref{decomp!}, Theorem \ref{unique}, Lemma \ref{uu12} and Lemma \ref{uu13}. Since $0$-KMS states are trace states the last part of the theorem follows from \cite{ST}; more precisely from Theorem 5.21 and Proposition 5.16 in \cite{ST}.

\end{proof}

One consequence of this theorem is that the limits $\lim_{ \beta \downarrow h(\phi)} \mu_{\Ro(c),\beta}$ exist in the weak*-topology for all $c \in \mathcal C_T$ and that the limit is the same, namely the unique non-atomic Borel probability measure which is scaled by $\phi$. This follows by combining Theorem \ref{uu34} with Proposition 5.3.25 in \cite{BR}.

\section{The factor type of the extremal KMS states}

As is well-known a $\beta$-KMS-state $\omega$ which is extremal in the simplex of $\beta$-KMS states is a factor state, i.e. the von Neumann algebra $\pi_{\omega}\left(C^*_r\left(\Gamma^+_{\phi}\right)\right)''$ generated by the GNS-representation $\pi_{\omega}$ is a factor, cf. Theorem 5.3.30 (3) in \cite{BR}.  

\begin{prop}\label{uu301} Assume that $\omega$ is an extremal $\beta$-KMS state with $\beta > h(\phi)$. It follows that $\pi_{\omega}\left(C^*_r\left(\Gamma^+_{\phi}\right)\right)''$ is a factor of type $I_{\infty}$.
\end{prop}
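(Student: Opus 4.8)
The plan is to locate a minimal projection inside $M := \pi_{\omega}\left(C^*_r\left(\Gamma^+_{\phi}\right)\right)''$ and to write its identity as a countably infinite orthogonal sum of such projections. Since $\omega$ is extremal it is a factor state, so $M$ is a factor; a factor possessing a minimal projection is of type $I$, and such a factor is of type $I_{\infty}$ as soon as its identity is a countably infinite orthogonal sum of nonzero projections.

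The structural input comes from the previous section and \refthm{uu34}: as $\beta > h(\phi)$, the state $\omega$ is purely atomic and, being extremal, it is of the form $\omega(a) = \int_{\mathbb T} P(a) \ d\mu$ with $\mu = \mu_{\Ro(c),\beta}$ the measure (\ref{uu51}), supported on a single restricted orbit $\Ro(c)$ of a critical point $c$ that is not pre-periodic, with $\mu(\{z\}) > 0$ for all $z \in \Ro(c)$. As noted in the previous section, every $z \in \Ro(c)$ is again not pre-periodic and hence has trivial isotropy group in $\Gamma^+_{\phi}$. For $z \in \Ro(c)$ set $p_z := \pi_{\omega}(1_{\{z\}}) \in \pi_{\omega}(C(\mathbb T))'' \subseteq M$, the spectral projection corresponding to the atom $z$. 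Comparing the vector state of the cyclic vector $\xi_{\omega}$ with $\mu$ on $C(\mathbb T)$ yields $\langle p_z \xi_{\omega}, \xi_{\omega} \rangle = \mu(\{z\}) > 0$, so $p_z \neq 0$, and the $p_z$, $z \in \Ro(c)$, are pairwise orthogonal.

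The heart of the argument is that each $p_z$ is minimal in $M$. Approximating $p_z$ in the strong operator topology by $\pi_{\omega}(g)$ with $g \in C(\mathbb T)$ supported in shrinking neighbourhoods of $z$, and using that $C_c\left(\Gamma^+_{\phi}\right)$ is a weakly dense $*$-subalgebra spanned by functions supported on open bi-sections, one checks that $p_z \pi_{\omega}(f) p_z$ is a scalar multiple of $p_z$ for every $f \in C_c\left(\Gamma^+_{\phi}\right)$: the convolution products localise near the unit at $z$, and the only element of $\Gamma^+_{\phi}$ with both range and source equal to $z$ is that unit, because the isotropy of $z$ is trivial. By Kaplansky density $p_z M p_z = \mathbb C p_z$, so $p_z$ is a minimal projection and $M$ is of type $I$. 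It remains to see that the $p_z$ exhaust the identity: $\xi_{\omega}$ is cyclic and separating for $M$, being the GNS vector of a KMS state (cf. \cite{BR}), and $\langle \left(1 - \sum_{z \in \Ro(c)} p_z\right) \xi_{\omega}, \xi_{\omega} \rangle = 1 - \mu\left(\Ro(c)\right) = 0$; hence the projection $1 - \sum_{z \in \Ro(c)} p_z$ kills $\xi_{\omega}$ and therefore vanishes, i.e. $\sum_{z \in \Ro(c)} p_z = 1$. Since $\Ro(c)$ is infinite --- it contains the backward orbits of the terminal critical points lying in it, and these are infinite because $\phi$ is exact and not injective --- this exhibits $1$ as a countably infinite orthogonal sum of nonzero minimal projections, so $M$ is of type $I_{\infty}$. (One may make the isomorphism $M \cong B\left(\ell^2(\Ro(c))\right)$ fully explicit by noting that $p_z$ and $p_w$ are Murray--von Neumann equivalent in $M$ via the partial isometry coming from any open bi-section of $\Gamma^+_{\phi}$ joining $z$ to $w$.)

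The step I expect to be most delicate is the passage from the trivial-isotropy property of the groupoid to the operator identity $p_z M p_z = \mathbb C p_z$: one must control the convolution products $g * f * g$ as the support of $g$ contracts to $z$ and justify the relevant weak limits (Kaplansky density, strong continuity of $a \mapsto p_z a p_z$ on bounded sets). A subsidiary point requiring care is the identification $\langle p_z \xi_{\omega}, \xi_{\omega} \rangle = \mu(\{z\})$ together with the separating property of $\xi_{\omega}$, which is what forces the family $\{p_z\}$ to sum to $1$ rather than to a proper subprojection.
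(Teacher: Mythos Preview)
Your argument is correct and follows essentially the same route as the paper: construct, for a point $z$ in the supporting orbit $\Ro(c)$, the spectral projection $p_z$ as a strong limit of $\pi_\omega(g)$ with $g$ shrinking to $1_{\{z\}}$, use trivial isotropy at $z$ to get $p_z M p_z = \mathbb C p_z$, and invoke infiniteness of $\Ro(c)$ for the $I_\infty$ conclusion. The paper's version is terser---it exhibits a single abelian projection and observes that the orbit is infinite---whereas you additionally verify the decomposition $\sum_{z} p_z = 1$ via the separating property of $\xi_\omega$; this extra step is not needed for the bare type-$I_\infty$ conclusion but does no harm.
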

\begin{proof} Let $c$ be a critical point such that the measure corresponding to $\omega$ is supported on $\Ro(c)$. Let $x\in \Ro(c)$ and let $\{f_n\}$ be a decreasing sequence of functions in $C(\mathbb T)$ converging pointwise down to the characteristic function of $\{x\}$. Then $\pi_{\omega}(f_n)$ converges in the strong operator topology to a non-zero projection $p \in \pi_{\omega}\left(C^*_r\left(\Gamma^+_{\phi}\right)\right)''$ such that $p\pi_{\omega}\left(C^*_r\left(\Gamma^+_{\phi}\right)\right)''p = \mathbb C p$. In particular, $p$ is an abelian projection in $\pi_{\omega}\left(C^*_r\left(\Gamma^+_{\phi}\right)\right)''$ which must therefore be of type $I$. It is infinite because $\Ro(c)$ contains the backward orbit of $c$ and therefore is infinite.
\end{proof}

\begin{thm}\label{uu100} The von Neumann algebra generated by the GNS representation of the unique non-atomic KMS state for the gauge action on $C^*_r\left(\Gamma^+_{\phi}\right)$ is the hyperfinite type $III_{\lambda}$-factor where $\lambda = e^{-h(\phi)}$.
\end{thm}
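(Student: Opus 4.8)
The plan is to realise $M:=\pi_{\omega}\left(C^*_r\left(\Gamma^+_{\phi}\right)\right)''$ as a Krieger--Feldman--Moore von Neumann algebra of an ergodic measured equivalence relation and to read off its type from the Radon--Nikodym cocycle. By \refthm{uu34} the non-atomic KMS state $\omega$ is the unique $h(\phi)$-KMS state, hence extremal and therefore a factor state, so $M$ is a factor; it has separable predual because $C^*_r\left(\Gamma^+_{\phi}\right)$ is separable, and since this $C^*$-algebra is nuclear (cf. \cite{ST}) $M$ is injective, hence hyperfinite by Connes' theorem. It remains to compute the type. By \refprop{decomp!} together with \reflemma{uu13} (which shows that at exponent $h(\phi)$ there is no atomic conformal measure) and \refthm{unique}, we have $\omega(a)=\int_{\mathbb T}P(a)\,d\nu$ with $\nu$ the non-atomic measure of \reflemma{uu15}, and by \reflemma{uu14} $\nu$ is $\left(\Gamma^+_{\phi},c_g\right)$-conformal with exponent $h(\phi)$; this is exactly the statement that $\nu$ is quasi-invariant for $\Gamma^+_{\phi}$ with Radon--Nikodym cocycle $D(\gamma)=e^{h(\phi)c_g(\gamma)}$, and the GNS representation of $\omega$ is the regular representation attached to $\nu$. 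Since the pre-periodic points are countable and $\nu$ non-atomic, the points with non-trivial isotropy are $\nu$-null (as in the proof of \refprop{decomp!}), so $M$ is the von Neumann algebra of the nonsingular equivalence relation $\mathcal R$ on $(\mathbb T,\nu)$ of restricted orbits, with modulus $D$. This relation is ergodic: an $\mathcal R$-invariant set of measure strictly between $0$ and $1$ would split $\nu$ into two distinct non-atomic $R^+_{\phi}$-invariant probability measures, contradicting the uniqueness established in the proof of \reflemma{uu106}.

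Next I compute Krieger's asymptotic ratio set $r^*(\mathcal R,D)\subseteq[0,\infty)$. Because $c_g$ is $\mathbb Z$-valued and $h(\phi)=\log a>0$ (where $a>1$ is the uniform slope of the piecewise-linear model of \refthm{11}), $D$ takes values in $\{\lambda^{n}:n\in\mathbb Z\}$ with $\lambda=e^{-h(\phi)}\in(0,1)$, so $r^*(\mathcal R,D)\subseteq\{0\}\cup\lambda^{\mathbb Z}$. For the reverse containment fix a Borel set $A\subseteq\mathbb T$ with $\nu(A)>0$ and an integer $m\geq1$. Since $\phi$ is piecewise monotone but not locally injective it has an open interval $J$ on which it is strictly increasing, and $W=\left\{(\phi(y),-1,y):y\in J\right\}$ is an open bi-section of $\Gamma^+_{\phi}$ with $c_g\equiv-1$ and associated transformation $y\mapsto\phi(y)$, so $W$ and its inverse have moduli $\lambda^{\pm1}$. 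Using ergodicity of $R^+_{\phi}$ to funnel, at each stage, a positive-measure piece of the current set into the next, interleave $m$ copies of $W$ (or of $W^{-1}$) with partial transformations from the pseudogroup of $R^+_{\phi}$ (all of $c_g=0$, hence modulus $1$) so as to produce a partial transformation of $\mathcal R$ on a positive-measure subset of $A$, with range in $A$ and modulus identically $\lambda^{m}$ (resp. $\lambda^{-m}$). Hence $\lambda^{n}\in r^*(\mathcal R,D)$ for all $n\in\mathbb Z$, and since $\lambda^{n}\to0$ can be approached arbitrarily closely we also get $0\in r^*(\mathcal R,D)$; thus $r^*(\mathcal R,D)=\{0\}\cup\lambda^{\mathbb Z}$. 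By Krieger's classification $M$ is of type $III_{\lambda}$ with $\lambda=e^{-h(\phi)}$, and being injective with separable predual it is the hyperfinite type $III_{\lambda}$-factor by Connes' uniqueness theorem.

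The main work is the ratio-set computation of the second paragraph: one must verify that the pseudogroup of $R^+_{\phi}$ acts transitively enough, modulo null sets, to localise the scaling bi-sections inside an arbitrary positive-measure set, and one must keep track of the $\nu$-null set of critical, pre-periodic and isotropy points so that the groupoid computation genuinely computes $M$. A variant avoiding Krieger's ratio set identifies the modular automorphism group of $\omega$ as $\sigma^{\omega}_t=\overline{\alpha}_{-h(\phi)t}$, where $\overline{\alpha}$ is the extension to $M$ of the gauge action; this flow has minimal period $T=2\pi/h(\phi)$ (it acts by $e^{-it}$ on the nonzero image in $M$ of a function supported on the bi-section $W$ above), its centralizer equals $\pi_{\omega}\left(C^*_r\left(R^+_{\phi}\right)\right)''$, which by the uniqueness result in the proof of \reflemma{uu106} is a factor and hence the hyperfinite $II_1$-factor, and $M$ is of type $III$ by the first approach; the structure theory of type $III$ factors with periodic modular flow and factorial centralizer (cf. \cite{BR}) then yields the same $\lambda=e^{-2\pi/T}=e^{-h(\phi)}$.
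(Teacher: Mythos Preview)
Your proposal is correct, and in fact your closing ``variant'' is essentially the paper's own proof: the paper identifies the modular group with (a rescaling of) the extended gauge action, observes via \reflemma{uu106} that the centralizer $\pi_{\omega}\left(C^*_r\left(R^+_{\phi}\right)\right)''$ is a factor, and then invokes Connes' results (Proposition 2.2.2 and Theorem 2.3.1 of \cite{C1}) to conclude that the Connes invariant $\Gamma(M)$ equals the Arveson spectrum of the modular group, which is $\mathbb Z\, h(\phi)$. This gives type $III_{\lambda}$ directly, without first establishing that $M$ is of type III and without any claim about the centralizer being the hyperfinite $II_1$-factor; your variant would be cleaner if you dropped both of those detours and cited Connes' spectrum result instead of appealing back to ``the first approach''.

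Your main argument via Krieger's ratio set is a genuinely different route. It trades the abstract modular/$\Gamma$-invariant machinery for a concrete computation in measurable dynamics: identify $M$ with the Feldman--Moore algebra of the orbit equivalence relation on $(\mathbb T,\nu)$ (using that isotropy is $\nu$-null), deduce $R^+_{\phi}$-ergodicity of $\nu$ from \reflemma{uu106}, and then combine the scaling bi-section $W$ with $R^+_{\phi}$-transport to realise each $\lambda^m$ as an essential value of the Radon--Nikodym cocycle. This is valid and arguably more illuminating dynamically, but it carries more overhead: you should justify that the GNS construction for $\omega=\nu\circ P$ really yields the measured-groupoid von Neumann algebra (standard, e.g.\ via \cite{Re}, but worth citing), and the ``funneling'' step---moving positive-measure pieces of $A$ into $s(W)$ and back via the full pseudogroup of $R^+_{\phi}$---should be stated as a consequence of ergodicity plus measurable selection rather than left implicit. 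The paper's approach avoids all of this by staying entirely at the $C^*$/$W^*$ level and using only the single input that the fixed-point algebra of the gauge action has a unique non-atomic trace.
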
 
\begin{proof} Let $\omega$ be the non-atomic $h(\phi)$-KMS state and let $\pi_{\omega}$ be corresponding GNS representation with cyclic vector $\Omega_{\omega}$. It was shown in \cite{ST} that $C^*_r\left(\Gamma^+_{\phi}\right)$ is nuclear and this implies that  $\pi_{\omega}\left(C^*_r\left(\Gamma^+_{\phi}\right)\right)''$ is hyperfinite. By Connes' classification of injective factors, \cite{C2}, it suffices then to show that $\pi_{\omega}\left(C^*_r\left(\Gamma^+_{\phi}\right)\right)''$ is of type $III_{e^{-h(\phi)}}$.

There is a unique $\sigma$-weakly-continuous group of automorphisms $\hat{\alpha}_t, t \in \mathbb R$, such that
$\hat{\alpha}_t\left(\pi_{\omega}(a)\right) = \pi_{\omega}\left(\alpha_t(a)\right)$,
cf. Corollary 5.3.4 in \cite{BR}. It follows then from Theorem 8.14.5 in \cite{Pe} that $\sigma_t = \hat{\alpha}_{h(\phi) t}$
is the modular group on $\pi_{\omega}(A)''$ associated to the vector state defined by $\Omega_{\omega}$. By Lemma \ref{uu106} the restriction of $\omega$ to $C^*_r\left(R^+_{\phi}\right)$ is the unique non-atomic tracial state on $C^*_r\left(R^+_{\phi}\right)$ and it is therefore an extremal tracial state. This implies that $\pi_{\omega}\left(C^*_r\left(R^+_{\phi}\right)\right)''$ is a factor. Since $\pi_{\omega}\left(C^*_r\left(R^+_{\phi}\right)\right)''$ is the fixed point algebra of $\sigma$ it follows from Proposition 2.2.2 in \cite{C1} that the Connes invariant $\Gamma\left(\pi_{\omega}\left(C^*_r\left(\Gamma^+_{\phi}\right)\right)'' \right)$ equals the Arveson spectrum $Sp(\sigma)$ of $\sigma$. It follows then from Theorem 2.3.1 in \cite{C1} that
$$
\Gamma\left(\pi_{\omega}\left(C^*_r\left(\Gamma^+_{\phi}\right)\right)'' \right)^{\perp} = \left\{ t \in \mathbb R : \ \hat{\alpha}_{h(\phi) t} = \id \right\} .
$$
Note that $(x,1,\phi(x)) \in \Gamma^+_{\phi}$ when $\val\left(\phi,x\right) = (+,+)$, and hence $C_c\left(\Gamma^+_{\phi}(1)\right) \neq 0$. Since $\hat{\alpha}_{h(\phi)t}(f) = e^{ih(\phi)t}f$ when $f \in C_c\left(\Gamma^+_{\phi}(1)\right)$, it follows that $\hat{\alpha}_{h(\phi)t} = \id \ \Leftrightarrow \ e^{ih(\phi)t} = 1  \ \Leftrightarrow \ t \in \mathbb Z\frac{2 \pi}{h(\phi)}$. Therefore $\Gamma\left(\pi_{\omega}\left(C^*_r\left(\Gamma^+_{\phi}\right)\right)'' \right) = \mathbb Z h(\phi)$, i.e. $\pi_{\omega}\left(C^*_r\left(\Gamma^+_{\phi}\right)\right)''$ is a $III_{e^{-h(\phi)}}$-factor. 
 
\end{proof}

\section{Ground states} Recall that a \emph{ground state} for the gauge action on $C^*_r\left(\Gamma^+_{\phi}\right)$ is a state $\omega$ with the property that $-i\omega\left(a^*\delta(a)\right) \geq 0$ for all $a$ in the domain of $\delta$ where $\delta$ is the generator of the gauge action, cf. \cite{BR}. Similarly, $\omega$ is a \emph{ceiling state} when $i\omega(a^*\delta(a)) \geq 0$ for all $a$ in the domain of $\delta$. 

It is not difficult to show that there are no ceiling states for the gauge action, but
%\begin{lemma}\label{uu41} There are no ceiling states for the gauge action on $C^*_r\left(\Gamma^+_{\phi}\right)$.
%\end{lemma}
%\begin{proof} Let $k \in \mathbb Z$. Then every $f \in C_c\left(\Gamma^+_{\phi}(k)\right)$ is in the domain of $\delta$ %and $\delta(f) = ikf$. Hence $-k\omega(f^*f) \geq 0$, should $\omega$ be a ceiling state. This implies that %$\omega(f^*f) = 0$ for all $f \in C_c\left(\Gamma^+_{\phi}(k)\right)$ when $k  >  0$. Since %$C_c\left(\Gamma^+_{\phi}%(1)\right)$ is dense in the first spectral subspace $E_1$ for the gauge action, it follow that %$\omega\left(E_1^*E_1\right) = \{0\}$. Since the closed span of $E_1^*E_1$ contains the unit $ 1 \in %C^*_r\left(\Gamma^+_{\phi}\right)$ by Lemma 6.5 in \cite{ST}, this is impossible; ceiling states do not exist. 
%\end{proof}
ground states exist as soon as there are critical points that are not pre-periodic, and we start now to identify them. We say that a terminal critical point $c$ is \emph{final} when $c' \in \mathcal C_T, \ \phi^m(c') = \phi^n(c), \ \val\left(\phi^n,c\right) = \val\left(\phi^m,c'\right)  \Rightarrow m \geq n$. Note that the $\Ro$-orbit of a terminal critical point contains at least one final critical point. We let $\mathcal C_F$ denote the set of final critical points. 

The fixed point algebra of the gauge action is $C^*_r\left(R^+_{\phi}\right)$ and there is a conditional expectation $Q : C^*_r\left(\Gamma^+_{\phi}\right) \to C^*_r\left(R^+_{\phi}\right)$ given either by the formula
$$
Q(a) = \frac{1}{2 \pi}\int_{0}^{2 \pi} \alpha_t(a) \ dt 
$$
or as the unique extension of the map $C_c\left(\Gamma^+_{\phi}\right) \to C_c\left(R^+_{\phi}\right)$ obtained by restricting functions to $R^+_{\phi}$. Since $\mathcal C_F$ is $R^+_{\phi}$-invariant, in the sense that $(x,0,y) \in R^+_{\phi}, \ x \in \mathcal C_F \Rightarrow y \in \mathcal C_F$, there is $*$-homomorphism $\pi_F : C^*_r\left(R^+_{\phi}\right) \to C^*_r\left(R^+_{\phi}|_{\mathcal C_F}\right)$ where $R^+_{\phi}|_{\mathcal C_F}$ is the reduction of $R^+_{\phi}$ to $\mathcal C_F$, i.e.
$$
R^+_{\phi}|_{\mathcal C_F} = \left\{ (x,0,y) \in R^+_{\phi} : \ x,y \in \mathcal C_F \right\}.
$$ 
Let $\left[\mathcal C_F\right]$ denote the set of $\Ro$-equivalence classes in $\mathcal C_F$ and $[c]$ the element of $\left[\mathcal C_F\right]$ represented by an element $c\in \mathcal C_F$. Then $C^*_r\left(R^+_{\phi}|_{\mathcal C_F}\right)$ is a finite dimensional $C^*$-algebra isomorphic to
\begin{equation}\label{uu52}
\oplus_{[c] \in \left[\mathcal C_F\right]} \ M_{\#[c]}(\mathbb C).
\end{equation}
%where $n_{[c]} = \# [c]$ is the number of elements in $\mathcal C_F$ that are $\Ro$-equivalent to $c$.

\begin{lemma}\label{uu44} $-i \pi_{F} \circ Q\left(a^*\delta(a)\right) \geq 0$ in  $C^*_r\left(R^+_{\phi}|_{\mathcal C_F}\right)$ for every $ a$ in the domain of $\delta$.
\end{lemma}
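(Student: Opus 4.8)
The plan is to reduce the statement to a positivity check in the finite-dimensional algebra $C^*_r\left(R^+_{\phi}|_{\mathcal C_F}\right) \cong \oplus_{[c]} M_{\#[c]}(\mathbb C)$, where it can be verified block by block. First I would note that it suffices to prove the inequality for $a$ in a norm-dense $*$-subalgebra of the domain of $\delta$ that is invariant under the gauge action and consists of analytic elements; the natural choice is the algebraic span of $C_c\left(\Gamma^+_{\phi}(k)\right)$, $k \in \mathbb Z$, since $\delta$ acts on $f \in C_c\left(\Gamma^+_{\phi}(k)\right)$ simply by $\delta(f) = ik f$. Writing a general such $a$ as a finite sum $a = \sum_k a_k$ with $a_k \in C_c\left(\Gamma^+_{\phi}(k)\right)$, one computes $-i a^*\delta(a) = \sum_{k,l} l\, a_k^* a_l$, and applying $Q$ (which kills all components except the degree-zero part) gives $-iQ(a^*\delta(a)) = \sum_k k\, a_k^* a_k$, an element of $C^*_r\left(R^+_{\phi}\right)$. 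So the task is to show $\pi_F\left(\sum_k k\, a_k^* a_k\right) \geq 0$.

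Next I would examine how $\pi_F$ acts on a product $a_k^* a_l$ restricted to $R^+_{\phi}|_{\mathcal C_F}$. The key structural input is the \emph{finality} of the critical points in $\mathcal C_F$: if $c$ is final and $(c,0,c') \in R^+_{\phi}$ with $c' \in \mathcal C_F$, then the data exhibiting $c \overset{0}{\sim} c'$ uses $\phi^n(c) = \phi^m(c')$ with $n = m$ at the "lowest level," so elements of $\mathcal C_F$ behave rigidly under $\phi$. From this I expect to extract that for $f \in C_c\left(\Gamma^+_{\phi}(k)\right)$ with $k \neq 0$, the source fibre condition forces $f$ to vanish appropriately on the part of the groupoid meeting $\mathcal C_F$ — more precisely, that $(x,k,y) \in \Gamma^+_{\phi}$ with $x, y$ both hitting the orbit of a final critical point and $k < 0$ is impossible, because $k < 0$ would mean $c'$ can be reached from $c$ at a strictly higher iterate in the wrong direction, contradicting finality. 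Consequently $\pi_F(a_k^* a_k)$ is nonzero only when $k \geq 0$ (or vanishes when it would contribute negatively), so that $\pi_F\left(\sum_k k\, a_k^* a_k\right) = \sum_{k \geq 0} k\, \pi_F(a_k^* a_k)$ is manifestly a sum of nonnegative multiples of positive elements, hence $\geq 0$.

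The main obstacle, and the step requiring care, is precisely this claim that finality rules out the negative-degree contributions after applying $\pi_F$. I would need to analyze an element $(c, k, c'') \in \Gamma^+_{\phi}$ with $c'' \in \mathcal C_F$ and $k < 0$: such an element means there are $n, m \in \mathbb N$ with $k = n - m < 0$, $\phi^n(c) = \phi^m(c'')$, and matching valencies. Since $c'' \in \mathcal C_F$ is final, and $c$ (being in the source fibre of something mapping $\mathcal C_F$-ward, and itself in the domain making $a_k$ nonzero on $R^+_{\phi}|_{\mathcal C_F}$-relevant pieces) must also lie in $\Ro(c'')$ and be pre-critical, one traces back to a final critical point $c'''$ with $c'''$ in the backward orbit of $c$; then $\phi^{n'}(c''') = \phi^{m'}(c'')$ for suitable $n', m'$, and the relation $k < 0$ propagates to give $n' < m'$ in a way that violates the defining implication $m \geq n$ for final critical points. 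Assembling this carefully — keeping track of valencies and of which iterates appear — is the crux; once it is in place, the positivity is immediate. I would also double-check the density/continuity reduction so that the inequality, proven on the dense subalgebra, passes to all of the domain of $\delta$ by the usual closedness argument for generators.
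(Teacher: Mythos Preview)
Your approach is the same as the paper's: decompose $a=\sum_k a_k$ with $a_k\in C_c(\Gamma^+_\phi(k))$, use $Q$ to collapse to $\sum_k k\,a_k^*a_k$, and then argue that $\pi_F(a_k^*a_k)=0$ for $k<0$ because a final critical point cannot lie in $s\bigl(\Gamma^+_\phi(k)\bigr)$ for $k\le -1$ (equivalently in $r\bigl(\Gamma^+_\phi(k)\bigr)$ for $k\ge 1$). The paper states this last fact in one line and then writes the remaining positivity as $\pi_F\circ Q\bigl((\sum_{k\ge 0}\sqrt{k}\,f_k)^*(\sum_{k\ge 0}\sqrt{k}\,f_k)\bigr)$.

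Two slips in your sketch of the key step. First, the auxiliary critical point $c'''$ you need lies in the \emph{forward} orbit of your $c$ (your $c$ is an arbitrary point $z$ with $(z,k,c'')\in\Gamma^+_\phi$; since $z$ is pre-critical and not pre-periodic, take $c'''=\phi^j(z)$ for the largest $j$ with $\phi^j(z)\in\mathcal C_1$), not the backward orbit. With the backward orbit there is no reason such a $c'''$ exists. Second, $c'''$ is only guaranteed to be \emph{terminal}, not final; but terminal is exactly what the definition of ``$c''$ final'' quantifies over, so that is all you need. With these corrections one gets $(c'',\,j-k,\,c''')\in\Gamma^+_\phi$ with $j-k\ge 1$ and $c'''\in\mathcal C_T$, contradicting finality of $c''$; the valency check goes through because $\val(\phi^m,c''')\in\{(+,-),(-,+)\}$ for any terminal critical point, which forces $(z,j,c''')\in\Gamma^+_\phi$ regardless of $\val(\phi^j,z)$.
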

\begin{proof} When $c$ is a final critical point it follows that $\val\left(\phi^n,c\right) \neq \val\left(\phi^m,y\right)$ when $n > m$ and $\phi^n(c) = \phi^m(y)$. This implies that
$$
c \notin r \left(\bigcup_{k \geq 1} \Gamma^+_{\phi}(k) \right) = s \left(\bigcup_{k \leq -1} \Gamma^+_{\phi}(k) \right),
$$
and it follows that 
$f^*g(c,0,c') = 0$ when $c,c'$ are final critical points and either $f$ or $g$ is in $\bigcup_{k \leq -1} C_c\left( \Gamma^+_{\phi}(k)\right)$. Hence 
$\pi_{F} \circ Q(f^*g) = 0$ when $f$ or $g$ is in $\bigcup_{k \leq -1} C_c\left( \Gamma^+_{\phi}(k)\right)$. An arbitrary element $f \in C_c\left(\Gamma^+_{\phi}\right)$ can be written as a finite sum
$$
f = \sum_{k \in \mathbb Z} f_k
$$
where $f_k \in C_c\left(\Gamma^+_{\phi}(k)\right)$. We find then that
\begin{equation*}
\begin{split}
&-i \pi_F \circ Q\left(f^* \delta(f)\right) =     \sum_{k,l \in  \mathbb Z} k \pi_F \circ Q\left(f_l^*f_k\right)  =  \sum_{k  \in \mathbb Z} k \pi_F \left(f_k^*f_k\right) \\
& = \sum_{k \geq 0} k\pi_F \left(f_k^*f_k\right)   = \pi_F \circ Q \left(\left(\sum_{k \geq 0} \sqrt{k} f_k\right)^* \left(\sum_{k \geq 0} \sqrt{k} f_k\right) \right) \geq 0.
\end{split}
\end{equation*}
This proves the lemma since $C_c\left(\Gamma^+_{\phi}\right)$ is a core for $\delta$.

\end{proof}

It follows that every state $\omega$ of $C^*_r\left(R^+_{\phi}|_{\mathcal C_F}\right)$ gives rise to the ground state $a \mapsto \omega\left(\pi_F \circ Q(a)\right)$.

\begin{thm}\label{uu45} The association $\omega \mapsto \omega \circ \pi_F \circ Q$ is a bijection from the states on $C^*_r\left(R^+_{\phi}|_{\mathcal C_F}\right)$ to the ground states for the gauge action on $C^*_r\left(\Gamma^+_{\phi}\right)$.
\end{thm}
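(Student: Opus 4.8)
The plan is to show that $\omega \mapsto \omega \circ \pi_F \circ Q$ is well-defined into the ground states, injective, and surjective. Well-definedness is already furnished by Lemma~\ref{uu44}: for a state $\omega$ on $C^*_r(R^+_\phi|_{\mathcal C_F})$ and $a$ in the domain of $\delta$, we have $-i\,\omega(\pi_F\circ Q(a^*\delta(a))) = \omega(-i\,\pi_F\circ Q(a^*\delta(a))) \geq 0$ since $\omega$ is positive and $-i\,\pi_F\circ Q(a^*\delta(a)) \geq 0$. So $\omega\circ\pi_F\circ Q$ is a ground state. Injectivity is immediate from surjectivity of $\pi_F$ and $Q$: if $\omega_1\circ\pi_F\circ Q = \omega_2\circ\pi_F\circ Q$ then, since $\pi_F\circ Q$ maps onto $C^*_r(R^+_\phi|_{\mathcal C_F})$ (because $Q$ is a conditional expectation onto $C^*_r(R^+_\phi)$ and $\pi_F$ is a surjective $*$-homomorphism onto the finite-dimensional algebra~\eqref{uu52}), we get $\omega_1 = \omega_2$.

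The substantive part is surjectivity: given an arbitrary ground state $\omega$ for the gauge action, I must produce a state $\omega'$ on $C^*_r(R^+_\phi|_{\mathcal C_F})$ with $\omega = \omega'\circ\pi_F\circ Q$. The first step is the standard fact (cf.\ \cite{BR}, and the analysis in \cite{Th2}) that a ground state is automatically invariant under the gauge action, hence factors through the conditional expectation $Q$: $\omega = \omega\circ Q$. This reduces the problem to showing that $\omega$, viewed as a state on $C^*_r(R^+_\phi)$, kills the kernel of $\pi_F$, i.e.\ that $\omega$ factors through the finite-dimensional quotient $C^*_r(R^+_\phi|_{\mathcal C_F})$.

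To see this, I would use the ground state inequality together with the structure of $R^+_\phi$. The key point is that the ``positive energy'' part of $\Gamma^+_\phi$ — the parts $\Gamma^+_\phi(k)$ with $k\geq 1$ — forces, through the inequality $-i\omega(a^*\delta(a))\geq 0$ applied to $a\in C_c(\Gamma^+_\phi(k))$, that $\omega$ vanishes on $C_c(\Gamma^+_\phi(k))^*C_c(\Gamma^+_\phi(k))$ for $k\geq 1$ (the coefficient $k>0$ makes a nonnegative quantity, but pairing with a careful choice and using that $\omega$ is a ground state, one concludes the relevant expectations vanish — this is the ``all energy is in the vacuum'' phenomenon). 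Combined with $\omega = \omega\circ Q$, this pins $\omega|_{C(\mathbb T)}$ to be a measure $\mu$ on $\mathbb T$ that is forced to be supported on points $x$ that do \emph{not} lie in $r(\bigcup_{k\geq 1}\Gamma^+_\phi(k))$; by the computation in the proof of Lemma~\ref{uu44}, within the pre-critical non-pre-periodic locus these are exactly (the restricted orbits of) the final critical points, and on the complementary part (non-pre-critical points, or pre-periodic points) the conformality/invariance constraints leave no room — one argues as in Lemma~\ref{uu12} and Lemma~\ref{uu13} that any such mass would have to propagate backward indefinitely, contradicting finiteness. Hence $\mu$ is supported on $\mathcal C_F$ (more precisely on the backward-orbit-free top layer captured by $R^+_\phi|_{\mathcal C_F}$), and $\omega$ restricted there, using $\omega = \omega\circ Q$ once more to control the off-diagonal matrix entries in~\eqref{uu52}, defines the desired state $\omega'$ on $C^*_r(R^+_\phi|_{\mathcal C_F})$ with $\omega'\circ\pi_F = \omega|_{C^*_r(R^+_\phi)}$, so $\omega'\circ\pi_F\circ Q = \omega\circ Q = \omega$.

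The main obstacle I anticipate is the surjectivity step, specifically showing that a general ground state cannot have any mass outside $\mathcal C_F$: one must rule out mass on non-pre-critical points and on pre-periodic points, and for the pre-critical but non-final critical orbit elements one must show the ground state condition actually forces the weight to concentrate at the final representative rather than spreading along the $\Ro$-orbit. I expect this to require re-running the backward-propagation arguments of Lemma~\ref{uu12} and Lemma~\ref{uu13} in the $\beta=\infty$ limit, keeping track of the valency bookkeeping exactly as in the proof of Lemma~\ref{uu44}, where the defining property of final critical points ($m\geq n$) is what ensures $c\notin r(\bigcup_{k\geq 1}\Gamma^+_\phi(k))$. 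The verification that $\omega=\omega\circ Q$ for ground states and that the off-diagonal entries are determined is routine given the étale structure and \cite{BR}, \cite{N}.
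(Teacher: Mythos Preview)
Your overall architecture matches the paper's: establish well-definedness via Lemma~\ref{uu44}, note injectivity from surjectivity of $\pi_F\circ Q$, use $\alpha$-invariance of ground states to get $\omega=\omega\circ Q$, and then argue that $\omega|_{C^*_r(R^+_\phi)}$ factors through $\pi_F$ by showing the associated measure is supported on $\mathcal C_F$.

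However, there is a genuine sign error in your key step. For $a\in C_c(\Gamma^+_\phi(k))$ one has $\delta(a)=ika$, so $-i\,\omega(a^*\delta(a))=k\,\omega(a^*a)$. When $k\geq 1$ this is automatically nonnegative and gives no information whatsoever; the ground state condition forces $\omega(a^*a)=0$ only when $k\leq -1$. Your stated conclusion that the measure is supported away from $r\bigl(\bigcup_{k\geq 1}\Gamma^+_\phi(k)\bigr)$ is in fact correct, but it comes from applying the inequality to $f\in C_c(\Gamma^+_\phi(k))$ with $k\leq -1$, for which $f^*f$ lives on $s(\Gamma^+_\phi(k))=r(\Gamma^+_\phi(-k))$. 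This is exactly what the paper does.

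The second, more substantive gap is in how you propose to show that \emph{every} $x\in\mathbb T\setminus\mathcal C_F$ lies in $r\bigl(\bigcup_{k\geq 1}\Gamma^+_\phi(k)\bigr)$. You suggest re-running the backward-propagation arguments of Lemmas~\ref{uu12} and~\ref{uu13} ``in the $\beta=\infty$ limit''. Those lemmas concern $\beta$-summability of orbits and do not directly yield the pointwise statement needed here. The paper instead proves this claim by a short case analysis: for non-critical $x$ it invokes Lemma~6.1 of \cite{ST} (with an extra remark for the non-simple case) to find $(x,1,y)\in\Gamma^+_\phi$; for a pre-periodic critical point of pre-period $p$ it uses $(x,2p,x)\in\Gamma^+_\phi$; and for a critical point that is not pre-periodic but not final, the very failure of finality supplies $n>m$ with $(x,n-m,c)\in\Gamma^+_\phi$. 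Once this claim is in hand, a local bump function $f\in C_c(\Gamma^+_\phi(k))$ with $k\leq -1$ and $f^*f>0$ near $x$ gives $\omega(f^*f)=0$, and a partition-of-unity argument shows $\omega$ annihilates $C_c(\mathbb T\setminus\mathcal C_F)$, hence $\ker\pi_F$. Your outline would benefit from replacing the vague appeal to Lemmas~\ref{uu12}--\ref{uu13} with this direct verification.
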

\begin{proof} Let $\omega$ be a ground state. Then $\omega$ is $\alpha$-invariant by Proposition 5.3.19 in \cite{BR} and hence $\omega = \omega \circ Q$. It remains to prove that $\omega|_{C^*_r\left(R^+_{\phi}\right)}$ factorizes through $\pi_F$, or alternatively that $\omega$ annihilates $C^*_r\left(R^+_{\phi}|_{\mathbb T \backslash \mathcal C_F}\right)$ where 
$$
R^+_{\phi}|_{\mathbb T \backslash \mathcal C_F} = \left\{ (x,0,y) \in \Gamma^+_{\phi} : \ x,y \in \mathbb T \backslash \mathcal C_F \right\}
$$
is the reduction of $R^+_{\phi}$ to $\mathbb T \backslash \mathcal C_F$. Since $C_c\left(\mathbb T \backslash \mathcal C_F \right)$ contains an approximate unit for $C^*_r\left(R^+_{\phi}|_{\mathbb T \backslash \mathcal C_F}\right)$ it suffices to show that $\omega$ annihilates $C_c\left(\mathbb T \backslash \mathcal C_F \right)$.

Let $x \in \mathbb T \backslash \mathcal C_F$. We claim that there is an element $y \in \mathbb T$ and a $k \geq 1$ such that $(x,k,y) \in \Gamma^+_{\phi}(k)$. When $x \notin \mathcal C_1 $ and $C^*_r\left(\Gamma^+_{\phi}\right)$ is simple, it follows from Lemma 6.1 in \cite{ST} that we can find such a $y$ with $k = 1$. To reach the same conclusion when $C^*_r\left(\Gamma^+_{\phi}\right)$ is not simple, note that the proof of Lemma 6.1 in \cite{ST} works as long as $\Ro(x)$ is not finite because we assume that $\phi$ is exact. It follows from Proposition 5.16 in \cite{ST} that $\Ro(x)$ is finite for all elements $x \in \mathbb T$, except possibly a non-critical fixed point. For such a fixed point $x$ we have that $(x,2,x) \in \Gamma^+_{\phi}$, and in this way we obtain the claim for all $x \in \mathbb T \backslash \mathcal C_1$. When $x \in \mathcal C_1$ is pre-periodic, say of pre-period $p$, we can take $y = x$ and $k=2p$ since $(x,2p,x) \in \Gamma^+_{\phi}$. Finally, when $x \in \mathcal C_1$ is not pre-periodic, but fail to be final because there are $n,m \in \mathbb N$ and $c \in \mathcal C_1$ such that $ n > m$ and $(x,n-m,c) \in \Gamma^+_{\phi}$ we can take $y = c$ and $k = n-m$. This proves the claim and from it follows that for each $x \in \mathbb T \backslash \mathcal C_F$ there is a $k \leq -1$ and an element $f \in C_c\left(\Gamma^+_{\phi}(k)\right)$ such that $f^*f \in C_c\left(\mathbb T \backslash \mathcal C_F \right)$ and $f^*f(z) > 0$ for all $z$ in a neighborhood of $x$. Therefore, to conclude that $\omega$ annihilates $ C_c\left(\mathbb T \backslash \mathcal C_F \right)$ and factorises through $\pi_F$ it remains only to verify that $\omega(f^*f) = 0$ when $f \in C_c\left(\Gamma^+_{\phi}(k)\right)$ for some $k \leq -1$. This follows from the ground state condition since
$$
0 \leq -i\omega(f^*\delta(f)) = k\omega(f^*f) 
$$  
implies that $\omega(f^*f) = 0$ when $k < 0$
\end{proof}

By using the description of the extremal $\left(\Gamma^+_{\phi}, c_g\right)$-conformal measure $\mu_{\Ro(c),\beta}$ given in (\ref{uu51}) it is easy to see that as $\beta$ tends to infinity the $\beta$-KMS state given by $\mu_{\Ro(c),\beta}$ will converge in the weak* topology to the ground state obtained from the extremal tracial state on the $C^*$-algebra (\ref{uu52}) which is supported on the direct summand coming from the final critical points in $\Ro(c)$.

\begin{example}\label{example} As an illustration of Theorem \ref{uu34}, let $ \alpha  > 12$ be a non-algebraic number. Let $f : [0,1] \to [0,1]$ be the uniformly piecewise linear map with slope $\alpha$ which is zero in $0,\frac{1}{4}, \frac{1}{2}, \frac{2}{3}, \frac{5}{6}$ and $1$, and increasing in $\left[0,\frac{1}{8}\right]$.

\begin{center}
\begin{tikzpicture}[x=5cm,y=5cm,font=\scriptsize]
  \draw[->] (-0.01,0) -- (1.1,0);
  \draw[->] (0,-0.01) --(0,1.2);

  \coordinate (a) at (0.5,0);
  \coordinate (b) at (0.25,0);
  \coordinate (c) at (1,0);
  \coordinate (d1) at (0.5+1/3*0.5,0);
  \coordinate (d2) at (0.5+2/3*0.5,0);
 
  % \foreach\n in {a,b,c,d1,d2} {
  %   \fill[red] (\n) circle (1pt) ;
  % }

  \def\yyy{1}
 
  \draw[line join=round] (0,0)
  --++ (0.125,\yyy)
  -- (b)
  --++ (0.125,\yyy) 
  -- (a)
  --++ (1/3*1/2*1/2,2/3*\yyy)
  --++ (1/3*1/2*1/2,-2/3*\yyy)
  --++ (1/3*1/2*1/2,2/3*\yyy)
  --++ (1/3*1/2*1/2,-2/3*\yyy)
  --++ (1/3*1/2*1/2,2/3*\yyy)
  --++ (1/3*1/2*1/2,-2/3*\yyy)
  ;

  \node at (a) [below] {\strut$\tfrac12$};
  \node at (c) [below] {\strut$1$};

\end{tikzpicture}
\end{center}

Then $f$ is the lift of a circle map $\phi_{\alpha} : \mathbb T \to \mathbb T$ meeting the requirements of the present paper. There are $10$ critical points, $5$ of which are not pre-periodic; namely $\frac{1}{8}, \frac{3}{8}, \frac{7}{12}, \frac{3}{4}, \frac{11}{12}$ (or rather their images on the circle). The two first are in the same $\Ro$-orbit and the last three in another $\Ro$-orbit. They are all final. The topological entropy $h(\phi_{\alpha})$ is $\log \alpha$. There is a unique $\log \alpha$-KMS state and it is non-atomic. For each $\beta > \log \alpha$ there are two extremal $\beta$-KMS states, both purely atomic, corresponding to the division of the final critical points into two $\Ro$-orbit. The ground states are in one-to-one correspondence with the state space of the finite dimensional $C^*$-algebra $M_2(\mathbb C) \oplus M_3(\mathbb C)$. 

The example illustrates also how sensitive the structure of KMS states is to perturbations of the map. If we for example use $\alpha = 24$, all KMS and ground states disappear, except the non-atomic KMS state at the inverse temperature $\beta = \log 24$. 

\end{example}

\end{document}